\documentclass[11pt,reqno, oneside]{amsart}

\usepackage[
  a4paper,
  right=30mm,
  left=30mm,
  top=30mm,
  bottom=30mm
]{geometry}

\usepackage{amsmath,amsthm,amsfonts,amssymb,amscd,mathtools,bm}
\usepackage{mathrsfs}
\usepackage[T1]{fontenc}
\usepackage[shortlabels]{enumitem}
\usepackage{empheq}
\usepackage{placeins}
\usepackage{xcolor}
\usepackage[unicode=true]{hyperref}
\usepackage{float}

\theoremstyle{plain}
\newtheorem{theorem}{Theorem}[section]
\newtheorem{lemma}[theorem]{Lemma}
\newtheorem{corollary}[theorem]{Corollary}
\newtheorem{proposition}[theorem]{Proposition}

\theoremstyle{definition}
\newtheorem{definition}[theorem]{Definition}
\newtheorem{example}[theorem]{Example}

\theoremstyle{remark}
\newtheorem{remark}[theorem]{Remark}

\numberwithin{equation}{section}

\newcommand{\rd}{\mathrm{d}}

\newcommand{\N}{\mathbb{N}}
\newcommand{\R}{\mathbb{R}}

\newcommand{\Q}{\mathbb{Q}}
\newcommand{\1}{\mathbf{1}}

\newcommand{\normA}[1]{\left\|#1\right\|_{\mathrm{A}}}
\newcommand{\normi}[1]{\left\|#1\right\|_{\infty}}
\newcommand{\Linf}{L^{\infty}}

\newcommand{\Lipz}{\mathrm{Lip}_0}        
\newcommand{\Czo}[1]{C_0([0,#1])}

\title[]{\boldmath Isometries and geometric liftings for Alexiewicz-normed $L^\infty$ spaces}

\author[]{Nuno J. Alves}
\address[N. J. Alves]{King Abdullah University of Science and Technology, CEMSE Division, Thuwal 23955-6900, Saudi Arabia.}
\email{nuno.januarioalves@kaust.edu.sa}

\begin{document}

\begin{abstract}
We study spaces of essentially bounded functions on compact subsets of the real line, equipped with the Alexiewicz norm given by the supremum norm of the primitive. Using the associated measure projection, we classify their surjective linear isometries as weighted composition operators determined by a sign and an increasing bi-Lipschitz map between the corresponding measure intervals. We also give geometric criteria characterizing when this interval-level map lifts to a homeomorphism or to a bi-Lipschitz homeomorphism between the underlying compact sets.
\end{abstract}

\keywords{Banach--Stone theorem, Alexiewicz norm, surjective linear isometries, compact sets of positive measure}
\subjclass[2020]{Primary 46B04, 46E30; Secondary 26A16} 
\maketitle
\thispagestyle{empty}

\section{Introduction}

The description of surjective linear isometries between function spaces is a classical theme in functional analysis. A paradigmatic result is the Banach--Lamperti theorem: if $1\le p<\infty$ and $p\neq 2$, every surjective linear isometry between $L^p$ spaces is a weighted composition operator. In Lamperti's formulation, the underlying change of variables is encoded at the level of measure algebras, together with a weight determined by a Radon--Nikod\'ym derivative; see \cite{lamperti1958isometries,fleming2002isometries}. Thus the theorem is essentially measure-theoretic and does not impose continuity-type regularity on the underlying transformation. Isometries have also been studied in nonlinear metric settings; see, for instance, \cite{geher2020isometric,virosztek_maps_survey}.

A different picture arises for function spaces associated with nonabsolutely convergent integration theories, such as the Denjoy and Kurzweil--Henstock integrals; see \cite{gordon1994integrals,henstock1991general}. In that setting, the natural quantity to measure is often not $\int |f|$, but the size of the indefinite integral. For an integrable function $f$ on an interval one considers its primitive $F(x)=\int_a^x f$ and measures $f$ by the supremum norm of $F$. This viewpoint goes back to Alexiewicz~\cite{alexiewicz1948linear}; see also~\cite{talvila2006continuity}. In recent work, De Pauw~\cite{de2025banach} obtained a Banach--Lamperti type theorem for the Kurzweil--Henstock integral, where the change of variables is forced to be bi-absolutely continuous.

In this paper we address an analogous question for Alexiewicz-normed spaces of essentially bounded functions on compact subsets of the real line. Let $K\subseteq\R$ be compact with positive Lebesgue measure $|K|:=\lambda(K)>0$. For $f\in L^\infty(K)$ we consider the primitive
\begin{equation} \label{primitive}
J_K f(x)\coloneqq\int_{[\min K,x]\cap K} f \, \rd \lambda,\qquad x\in K,
\end{equation}
and equip $L^\infty(K)$ with the Alexiewicz norm
\begin{equation}\label{norm_A}
\normA{f}\coloneqq\sup_{x\in K}\big|J_K f(x)\big|.
\end{equation}
This construction is available for arbitrary compact sets of positive measure, including sets for which the Kurzweil--Henstock formalism is not naturally available. 

Our approach is based on the measure projection
\begin{equation} \label{pi_K}
  \pi_K(x) \coloneqq \lambda\big([\min K,x]\cap K\big)\in[0,|K|],
  \qquad x\in K,
\end{equation}
which is Lipschitz, nondecreasing and surjective, and is constant on the gaps of $K$. An open interval $(\ell,r)\subseteq[\min K,\max K]$ is a \emph{gap of $K$} if $\ell,r\in K$ and $(\ell,r)\cap K=\emptyset$. Equivalently, gaps are the connected components of $[\min K,\max K]\setminus K$.

For an interval $[a,b]$, let $C_0([a,b])$ denote the space of real-valued continuous functions on $[a,b]$ vanishing at $a$, and let $\Lipz([a,b])$ denote its subspace of Lipschitz functions, both equipped with the supremum norm. Choosing the natural left-endpoint selector $\sigma_K:[0,|K|]\to K$ for the fibers of $\pi_K$,
\begin{equation}
\sigma_K(t)\coloneqq\min \pi_K^{-1}(t) = \min \big\{x \in K :\, \pi_K(x) = t \big\},
\end{equation}
we show that the primitive map $J_K$ identifies the Alexiewicz space with a Lipschitz space on the measure interval. More precisely, there is a linear isometric embedding
\begin{equation} \label{eq:intro_A_embed}
  (L^\infty(K),\normA{\cdot})\hookrightarrow C_0([0,|K|])
\end{equation}
whose range is exactly $\Lipz([0,|K|])$; see Proposition~\ref{prop:embed}. In particular, the completion of $(L^\infty(K),\normA{\cdot})$ is canonically isometric to $C_0([0,|K|])$.

This yields a Banach--Stone-type description of surjective isometries in terms of the associated measure intervals. The present results thereby complement the Banach--Lamperti-type theorem of De Pauw~\cite{de2025banach} in a related Alexiewicz-normed setting on compact subsets of~$\R$. Recall that a \emph{lipeomorphism} is a bi-Lipschitz homeomorphism. Our first result is the following representation theorem.

\begin{theorem}\label{thm:ABS}
Let $M,K \subseteq \R$ be compact sets of positive measure. A linear map
\[
T:(\Linf(K),\normA{\cdot}) \to (\Linf(M),\normA{\cdot})
\]
is a surjective isometry if and only if there exist $\varepsilon\in\{\pm1\}$ and an increasing lipeomorphism 
\begin{equation} \label{psi}
\psi:\ [0,|M|] \to  [0,|K|]
\end{equation}
such that the map $\phi: M \to K$ given by
\begin{equation} \label{phi}
\phi =\sigma_K \circ \psi \circ \pi_M,
\end{equation}
satisfies, for every $f\in \Linf(K)$,
\begin{equation}\label{thm_J_represent}
J_M(Tf)=\varepsilon\,(J_K f)\circ \phi.
\end{equation}
Equivalently, for a.e.\ $y\in M$,
\begin{equation}\label{thm_T_represent}
(Tf)(y)=\varepsilon\,(f\circ \phi)(y)\,(\psi'\circ \pi_M)(y).
\end{equation}
\end{theorem}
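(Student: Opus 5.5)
The plan is to reduce everything to the interval level through Proposition~\ref{prop:embed}. Write $\Phi_K:(\Linf(K),\normA{\cdot})\to \Lipz([0,|K|])$ for the isometric isomorphism $f\mapsto G_f$, where $G_f$ is characterized by $J_K f = G_f\circ\pi_K$. Equivalently, $G_f(t)=J_Kf(\sigma_K(t))$. The map is well defined because $J_Kf$ is constant on the fibres of $\pi_K$: a fibre is either a point or the two endpoints of a gap, possibly with a null piece of $K$ between them.

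Given a surjective isometry $T$, the operator $S=\Phi_M T\Phi_K^{-1}$ is a surjective linear isometry $\Lipz([0,|K|])\to\Lipz([0,|M|])$ for the supremum norm. Since $\Lipz$ is dense in $C_0$, $S$ extends uniquely to a surjective linear isometry $\bar S: C_0([0,|K|])\to C_0([0,|M|])$. Surjectivity of the extension holds because its range is closed and contains the dense set $\Lipz([0,|M|])$.

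Next I apply the Banach--Stone theorem for $C_0(X)$ with $X=(0,|K|]$ locally compact, identifying $C_0([0,a])$ with $C_0((0,a])$. It gives a homeomorphism $\psi:(0,|M|]\to(0,|K|]$ and a continuous unimodular weight $u$ with $\bar S g = u\cdot (g\circ\psi)$. Because $(0,|M|]$ is connected and $u$ takes values in $\{\pm1\}$, $u\equiv\varepsilon$ is constant. A homeomorphism between half-open intervals must fix the closed endpoint, so $\psi(|M|)=|K|$. Hence $\psi$ is increasing, and it extends to an increasing homeomorphism of the closed intervals with $\psi(0)=0$.

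The key step, and the main obstacle, is to show that $\psi$ is bi-Lipschitz. I would test $S$ on $g=\mathrm{id}$, which lies in $\Lipz([0,|K|])$. Then $S\,\mathrm{id}=\varepsilon\psi$ must lie in $\Lipz([0,|M|])$, so $\psi$ is Lipschitz. Applying the same argument to $S^{-1}$, which maps $\Lipz$ onto $\Lipz$ and has Banach--Stone data $(\varepsilon,\psi^{-1})$, shows that $\psi^{-1}$ is Lipschitz.

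For the representation formulas, unwind the construction. We have $J_M(Tf)=(S G_f)\circ\pi_M=\varepsilon\, G_f\circ\psi\circ\pi_M$. Since $G_f=J_Kf\circ\sigma_K$, this equals $\varepsilon\,(J_Kf)\circ\phi$, which is \eqref{thm_J_represent}. For \eqref{thm_T_represent}, I would use the formula from Proposition~\ref{prop:embed} (or its proof) recovering $f$ from $G_f$, namely $f=G_f'\circ\pi_K$ a.e.\ on $K$. This uses that $\pi_K$ pushes $\lambda|_K$ forward to Lebesgue measure on $[0,|K|]$.

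The chain rule for the composition of the Lipschitz function $G_f$ with the bi-Lipschitz increasing $\psi$ gives $(G_f\circ\psi)'=(G_f'\circ\psi)\,\psi'$ a.e. Both sides are defined a.e. because $\psi^{-1}$ maps null sets to null sets. Composing with $\pi_M$ yields $Tf=\varepsilon\,(G_f'\circ\psi\circ\pi_M)(\psi'\circ\pi_M)$ a.e. on $M$. It remains to identify $G_f'\circ\psi\circ\pi_M$ with $f\circ\phi$ a.e.; I expect this to be a delicate point. The plan is to note that $\pi_K\circ\sigma_K=\mathrm{id}$, so $f\circ\sigma_K$ is a version of $G_f'$ a.e. on $[0,|K|]$, again by the push-forward property. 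Null sets are then preserved under $\psi^{-1}$ and under $\pi_M^{-1}$ intersected with $M$.

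For the converse, given $\varepsilon$ and $\psi$, the map $g\mapsto\varepsilon\, g\circ\psi$ is a surjective isometry of the $\Lipz$ spaces, with inverse $g\mapsto\varepsilon\, g\circ\psi^{-1}$. Conjugating it by the maps $\Phi$ produces an operator $T$ satisfying \eqref{thm_J_represent}. Any $T$ satisfying \eqref{thm_J_represent} is of this form, since $J_M$ determines $Tf$. The equivalence of \eqref{thm_J_represent} and \eqref{thm_T_represent} follows from the same chain-rule computation together with the injectivity of $J_M$.
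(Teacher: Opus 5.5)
Your proposal is correct and shares the paper's skeleton. Both arguments conjugate $T$ by the isometries of Proposition~\ref{prop:embed}, extend by density to the $C_0$ spaces, and apply Banach--Stone. Both then test on $G(t)=t$ for $S$ and $S^{-1}$ to get that $\psi$ is bi-Lipschitz, and unwind to \eqref{thm_J_represent}. Deriving Lemma~\ref{lem:banach_stone} from the classical theorem on $(0,a]$, as you do, only replaces the paper's citation.

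The routes differ after that.

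\textbf{The pointwise formula.} For \eqref{thm_T_represent}, the paper extends both sides of \eqref{thm_J_represent} to $[\min M,\max M]$; they agree on the gaps because both are constant there. It then differentiates through the non-injective map $\Pi_M$ using $\Pi_M'=\1_M$. You instead stay on $[0,|M|]$, where the only change of variables is the bi-Lipschitz $\psi$, so the chain rule is elementary. You then pull a.e.\ identities back to $M$ using that $\pi_M$ pushes $\lambda|_M$ forward to Lebesgue measure. This is cleaner, and it makes explicit the null-set transfer that the paper's chain rule through $\Pi_M$ uses tacitly.

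\textbf{The converse.} The paper defines $T$ by \eqref{thm_T_represent} and integrates back to \eqref{thm_J_represent}. It then computes the norm and gets surjectivity by symmetry. You conjugate the composition operator $g\mapsto\varepsilon\,g\circ\psi$ between the $\Lipz$ spaces, so isometry and surjectivity come for free. The ``equivalently'' clause then reduces to your forward computation plus injectivity of $J_M$.

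\textbf{Minor points.}
\begin{itemize}
\item $\Phi_K$ already denotes $G\mapsto G\circ\pi_K$ in the paper, so your reuse of the symbol clashes with it.
\item A nontrivial fibre of $\pi_K$ is not just ``the two endpoints of a gap''. It is a null set $K\cap[a,b]$ that may contain infinitely many gaps, as in Example~\ref{ex:fiber_not_gap}. Only the nullity matters for your argument.
\item The identity $G_f'=f\circ\sigma_K$ a.e.\ is exactly \eqref{deriv_Psi_J}. It is most directly justified by $\pi_K$ being Lipschitz, hence mapping null sets to null sets.
\end{itemize}
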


Theorem~\ref{thm:ABS} shows that the isometric classification is naturally reduced to the measure intervals through the projections $\pi_K$ and $\pi_M$. The geometric information of the compact sets reappears when one asks whether the map $\phi=\sigma_K\circ\psi\circ\pi_M$ can be chosen continuous or bi-Lipschitz. In general this need not happen, because the selector $\sigma_K$ may jump at fibers corresponding to endpoints of gaps.

To address this, we introduce two compatibility notions---fiber $\psi$-compatibility and gap $\psi$-compatibility---which, together with the representation theorem above, constitute the main structural contributions of the paper. These notions compare the fibers and gaps of the two measure projections. Roughly speaking, \emph{fiber $\psi$-compatibility} requires the nontrivial fibers of $\pi_M$ and $\pi_K$ to match, with the same order structure, under $\psi$, while \emph{gap $\psi$-compatibility} adds uniform comparability of the lengths of corresponding gaps. We show that these conditions provide exact geometric criteria for when there exists an increasing homeomorphism $\phi:M\to K$ satisfying $\pi_K\circ\phi=\psi\circ\pi_M$, and for when such a lift may in fact be chosen bi-Lipschitz.

The paper is organized as follows. Section~\ref{sec:preliminary} collects the necessary facts about measure projections and the Alexiewicz embedding into $C_0([0,|K|])$. Section~\ref{sec:proof_main} proves the representation theorem for surjective linear isometries. Sections~\ref{sec:fiber_compat} and~\ref{sec:gap_compat} develop fiber and gap compatibility and establish the homeomorphic and lipeomorphic lifting results.

\section{Preliminary material} \label{sec:preliminary}

This section collects the properties of the measure projection~$\pi_K$ and the
selector~$\sigma_K$ that will be used later, and recalls the identification of
$(L^\infty(K),\normA{\cdot})$ with $\Lipz([0,|K|])$.
\par 
Throughout this section, $K\subseteq\R$ is a compact set of positive measure. We write $\1_K$ for the indicator function of $K$.

\subsection{Measure projections and selector maps}  We record the elementary properties of $\Pi_K$, $\pi_K$, and $\sigma_K$ for later reference. Their proofs are straightforward and are omitted.

Let $\Pi_K$ denote the extension of $\pi_K$ to the interval $[\min K, \max K]$.
\begin{lemma} \label{lem:Pi_properties}
The extended projection $\Pi_K$ satisfies the following properties:
\begin{enumerate}[(i)]
\item $\Pi_K$ is Lipschitz continuous and \[\Pi_K'(x)=\1_K(x)\] for a.e.\ $x\in[\min K,\max K]$.
\item $\Pi_K$ is nondecreasing.
\item $\Pi_K$ is constant on each gap of $K$.
\item $\Pi_K$ is surjective.
\end{enumerate}
Consequently, the projection $\pi_K$ is Lipschitz continuous, nondecreasing and surjective.
\end{lemma}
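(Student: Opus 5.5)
The natural route is to write $\Pi_K$ as an indefinite integral and read off all four properties from that formula. For $x\in[\min K,\max K]$ set
\[
\Pi_K(x)=\lambda\big([\min K,x]\cap K\big)=\int_{\min K}^{x}\1_K\,\rd\lambda .
\]
This is consistent with \eqref{pi_K} on $K$, so $\pi_K=\Pi_K|_K$. Since $0\le \1_K\le 1$, for $x\le y$ we get
\[
0\le \Pi_K(y)-\Pi_K(x)=\lambda\big((x,y]\cap K\big)\le y-x .
\]
This single inequality gives (ii) and also shows that $\Pi_K$ is $1$-Lipschitz.

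For the derivative formula in (i) we use the Lebesgue differentiation theorem, applied to the integrable function $\1_K$ on $[\min K,\max K]$. It gives $\Pi_K'(x)=\1_K(x)$ for a.e.\ $x$, which completes (i).

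For (iii), let $(\ell,r)$ be a gap, so $(\ell,r)\cap K=\emptyset$. If $\ell\le x\le y\le r$, then $(x,y]\cap K\subseteq\{r\}$, a null set, and hence $\Pi_K(x)=\Pi_K(y)$. So $\Pi_K$ is constant on the closed interval $[\ell,r]$.

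For (iv), note $\Pi_K(\min K)=0$, and $\Pi_K(\max K)=|K|$ because $K\subseteq[\min K,\max K]$. By continuity and the intermediate value theorem, $\Pi_K$ maps onto $[0,|K|]$.

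The only step needing care is the final claim that $\pi_K=\Pi_K|_K$ is surjective, since $K$ need not be an interval. Given $t\in[0,|K|]$, pick $x\in[\min K,\max K]$ with $\Pi_K(x)=t$. If $x\notin K$, then $x$ lies in some gap $(\ell,r)$ with $\ell\in K$, because the complement of $K$ in $[\min K,\max K]$ is a union of gaps. By (iii), $\Pi_K(\ell)=\Pi_K(x)=t$. Hence $\pi_K(\ell)=t$ with $\ell\in K$, so $\pi_K$ is onto.

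Restriction keeps the Lipschitz and monotone properties, which gives the concluding sentence of the lemma. Nothing here is a genuine obstacle: the lemma is essentially the fundamental theorem of calculus for Lebesgue integrals, together with the gap decomposition of $[\min K,\max K]\setminus K$.
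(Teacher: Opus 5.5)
Your proof is correct. The paper omits this proof as straightforward, and your argument is the natural one it has in mind. You write $\Pi_K(x)=\int_{\min K}^{x}\1_K\,\rd\lambda$ and read off monotonicity, the $1$-Lipschitz bound and $\Pi_K'=\1_K$ a.e.\ from that formula. You then pass surjectivity from $\Pi_K$ to $\pi_K$ via left endpoints of gaps, which is exactly the step the paper itself uses later in the proof of Lemma~\ref{lem:S_M_dense}.
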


\begin{lemma}\label{lem:left-selector}
The map $\sigma_K$ is well-defined, strictly increasing, left-continuous, and satisfies $\pi_K(\sigma_K(t))=t$ for all $t \in [0, |K|]$.
\end{lemma}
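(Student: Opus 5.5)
We need to prove Lemma~\ref{lem:left-selector}: $\sigma_K$ is well defined, strictly increasing, left-continuous, and satisfies $\pi_K\circ\sigma_K=\mathrm{id}$. The plan is to rely only on Lemma~\ref{lem:Pi_properties}, namely that $\pi_K$ is continuous, nondecreasing and surjective onto $[0,|K|]$.

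\emph{Well-definedness and the identity.} Fix $t\in[0,|K|]$. By surjectivity of $\pi_K$, the fiber $\pi_K^{-1}(t)$ is nonempty. Because $\pi_K$ is continuous on the compact set $K$, the fiber is closed in $K$ and hence compact. A nonempty compact subset of $\R$ has a minimum, so $\sigma_K(t)$ is defined. Since $\sigma_K(t)$ belongs to the fiber, $\pi_K(\sigma_K(t))=t$.

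\emph{Strict monotonicity.} Let $s<t$. The points $\sigma_K(s)$ and $\sigma_K(t)$ are distinct, because $\pi_K$ takes the different values $s$ and $t$ on them. Suppose $\sigma_K(t)<\sigma_K(s)$. Since $\pi_K$ is nondecreasing, this gives $t=\pi_K(\sigma_K(t))\le \pi_K(\sigma_K(s))=s$, a contradiction. Hence $\sigma_K(s)<\sigma_K(t)$.

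\emph{Left-continuity.} This is the only step needing care. Let $t_n\uparrow t$ with $t_n<t$ (if $t=0$ there is nothing to prove). By monotonicity, $x_n:=\sigma_K(t_n)$ increases to some limit $x^*\le \sigma_K(t)$. Because $K$ is closed, $x^*\in K$, and continuity of $\pi_K$ gives $\pi_K(x^*)=\lim t_n=t$. So $x^*$ lies in the fiber over $t$, which forces $x^*\ge\sigma_K(t)$ by minimality. Therefore $x^*=\sigma_K(t)$.

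The key point in this last step is that the minimum of the fiber is what makes left-limits land correctly. Right-continuity fails exactly when the fiber over $t$ is a nontrivial set whose endpoints are the two ends of a gap.
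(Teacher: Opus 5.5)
Your proof is correct. It is the routine verification the paper has in mind when it omits the proof as straightforward: nonempty compact fibers come from surjectivity and continuity of $\pi_K$, strict monotonicity comes from monotonicity of $\pi_K$, and the left limit is $\le \sigma_K(t)$ and lies in the fiber over $t$ (by closedness of $K$ and continuity of $\pi_K$), so it equals the minimum; since $\sigma_K$ is monotone, checking increasing sequences suffices. Only your closing aside is imprecise: for $t<|K|$, right-continuity fails exactly when $t\in E_K$, and then $\sigma_K(t^+)=\max\pi_K^{-1}(t)$, but the minimum and maximum of the fiber need not be the two ends of a single gap (for instance, $M=[0,1]\cup\{2\}\cup\{3\}$ has fiber $\{1,2,3\}$ over $1$).
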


Now, let $E_K$ be the set 
\begin{equation} \label{set_E_K}
E_K\coloneqq \big\{t\in[0,|K|] : \#\pi_K^{-1}(t)>1 \big\}.
\end{equation}

\begin{lemma}\label{lem:properties_E_K}
The set $E_K$ has the following properties:
\begin{enumerate}[(i)]
\item $t\in E_K$ if and only if $t=\pi_K(\ell)=\pi_K(r)$ for some gap $(\ell,r)$ of $K$.
\item $E_K$ is countable; hence $|E_K|=0$.
\item $E_K$ is exactly the set of discontinuity points of the selector $\sigma_K$.
\end{enumerate}
\end{lemma}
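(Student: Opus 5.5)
The plan is to show that each fiber $\pi_K^{-1}(t)$ is a closed interval of $K$, that is, a set of the form $[a,b]\cap K$. Once that is in place, the three items come out of how $\Pi_K$ behaves between points of $K$.

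\emph{Structure of fibers.} Since $\pi_K$ is continuous and nondecreasing (Lemma~\ref{lem:Pi_properties}), $\pi_K^{-1}(t)$ is a compact set of the form $K\cap[a,b]$ with $a=\min\pi_K^{-1}(t)$ and $b=\max\pi_K^{-1}(t)$. Suppose the fiber has more than one point, so $a<b$. Then $\lambda(K\cap[a,b])=\Pi_K(b)-\Pi_K(a)=0$. I claim that $(a,b)\cap K=\emptyset$. If some $c\in K$ lay in $(a,b)$, then $c$ would also lie in the fiber. We must then rule out infinitely many points of $K$ in $(a,b)$, which is the main obstacle. The remedy is to go back to the definition of $E_K$: the condition $\#\pi_K^{-1}(t)>1$ only says the fiber has at least two points. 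It should therefore be read together with the paper's convention, where the relevant situation is a gap. I would restrict to the consecutive structure as follows. Take any two points $x<y$ of the fiber. If $(x,y)\cap K=\emptyset$, then $(x,y)$ is a gap. Otherwise pick $c\in(x,y)\cap K$, and note that the fiber still contains $x$ and $c$. By compactness, $\sup\{z\in K: z<y,\ z\in \pi_K^{-1}(t)\}$, or alternatively the component of $[\min K,\max K]\setminus K$ adjacent to $y$ from the left, should produce a gap. Concretely: $[x,y]\setminus K$ is open in $(x,y)$ and nonempty, because otherwise $[x,y]\subseteq K$ would have positive measure. Hence it contains a component $(\ell,r)$ with $\ell,r\in K\cap[x,y]$, and this is a gap lying inside the fiber. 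This proves the forward direction of (i). For the converse, if $(\ell,r)$ is a gap, then $\pi_K(\ell)=\pi_K(r)$ by Lemma~\ref{lem:Pi_properties}(iii) and continuity, and $\ell\ne r$, so $t\in E_K$.

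\emph{Countability (ii).} Gaps are pairwise disjoint open intervals, so there are only countably many of them, since each contains a rational. By (i), $E_K$ is the image of this countable family under $(\ell,r)\mapsto\pi_K(\ell)$, hence countable, and therefore Lebesgue-null.

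\emph{Discontinuities (iii).} Lemma~\ref{lem:left-selector} already gives that $\sigma_K$ is strictly increasing and left-continuous, so the only possible discontinuities are jumps from the right, $\sigma_K(t^+)>\sigma_K(t)$. Suppose $t\in E_K$, so there is a gap $(\ell,r)$ in the fiber. Every $s>t$ has $\sigma_K(s)\ge\max\pi_K^{-1}(t)\ge r>\ell\ge\sigma_K(t)$, so $\sigma_K$ jumps at $t$. Conversely, suppose the fiber is the single point $\{x\}$. Then $u=\lim_{s\downarrow t}\sigma_K(s)$ exists by monotonicity. Since $K$ is compact, $u\in K$, and continuity of $\pi_K$ gives $\pi_K(u)=t$. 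Hence $u=x=\sigma_K(t)$, so $\sigma_K$ is right-continuous, and therefore continuous, at $t$.
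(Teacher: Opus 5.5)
Your arguments for (i) and (ii) are sound; the paper omits these proofs as routine. The key step is correct: for two points $x<y$ of a fiber, $\lambda(K\cap[x,y])=\pi_K(y)-\pi_K(x)=0$, so $(x,y)\setminus K$ is a nonempty open set. Any component $(\ell,r)$ of it is a gap of $K$, and monotonicity of $\pi_K$ forces $\pi_K(\ell)=\pi_K(r)=t$.

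You should, however, delete the intermediate claim that $(a,b)\cap K=\emptyset$ for $a=\min\pi_K^{-1}(t)$, $b=\max\pi_K^{-1}(t)$. It is false: for $M=[0,1]\cup\{2\}\cup\{3\}$ in Example~\ref{ex:fiber_order_type} the fiber over $1$ is $\{1,2,3\}$, and in Example~\ref{ex:fiber_not_gap} the fiber over $0$ is infinite. Nothing you do afterwards uses this claim. There is also no ``convention'' to appeal to, since $\#\pi_K^{-1}(t)>1$ is exactly what the component argument needs.

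The genuine gap is in (iii). Your jump argument (every $s>t$ has $\sigma_K(s)\ge r>\ell\ge\sigma_K(t)$) requires some $s\in(t,|K|]$, so it only covers $t\in E_K$ with $t<|K|$. For $t=|K|$ the argument is vacuous, and the conclusion is in fact false there. By Lemma~\ref{lem:left-selector}, $\sigma_K$ is left-continuous on $[0,|K|]$, hence always continuous at the right endpoint $|K|$, yet $|K|$ can lie in $E_K$.

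For instance, take $K=[0,1]\cup\{2\}$, the set $K$ of Example~\ref{ex:fiber_order_type}. Then $\pi_K^{-1}(1)=\{1,2\}$, so $E_K=\{1\}=\{|K|\}$, while $\sigma_K=\mathrm{id}_{[0,1]}$ is continuous. The same happens for $M=[0,\tfrac12]\cup\{1\}$ in Example~\ref{ex:EK_mismatch}.

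So this step cannot be repaired as the lemma is stated. What your argument actually proves, together with your correct right-continuity argument for singleton fibers, is that the set of discontinuity points of $\sigma_K$ equals $E_K\setminus\{|K|\}$. You should state (iii) in this corrected form rather than claim the lemma as written. The correction is harmless for the rest of the paper, since only (i) and (ii) are invoked in later proofs.
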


\subsection{Alexiewicz isometric isomorphism}
In this subsection we establish the isometric embedding~\eqref{eq:intro_A_embed}. We define an auxiliary space $X_K$ of continuous functions on~$K$ by 
\begin{equation} \label{space_X_K}
X_K\coloneqq\Big\{F\in C(K) :\, F(\min K)=0, \ F|_{\pi_K^{-1}(t)} \text{ is constant for }  t\in[0,|K|]\Big\}.
\end{equation}
\par 

For $f\in L^\infty(K)$, the primitive $J_K f$ belongs to $X_K$: it is Lipschitz on $K$, vanishes at $\min K$, and is constant on each fiber of $\pi_K$.

We consider the maps $\Phi_K:\Czo{|K|}\to X_K$ and $\Psi_K:X_K\to \Czo{|K|}$ given by
\begin{equation} \label{map_Phi}
\Phi_K G(x)\coloneqq G(\pi_K(x)), \quad x \in K,
\end{equation}
and
\begin{equation} \label{map_Psi}
\Psi_K F(t) \coloneqq F(\sigma_K(t)), \quad t \in [0,|K|].
\end{equation}
Then $\Phi_K$ and $\Psi_K$ are inverse linear isometries. In particular, $X_K$ is canonically isometric to $C_0([0,|K|])$ and
\[
X_K=\{G\circ \pi_K:\, G\in C_0([0,|K|])\}.
\]

\begin{proposition} \label{prop:embed}
The composition
\begin{equation} \label{isometric_embedding}
(\Linf(K),\normA{\cdot})\xrightarrow{J_K} (X_K, \normi{\cdot})
\xrightarrow{\Psi_K} (\Czo{|K|},\normi{\cdot})
\end{equation}
is a linear isometric embedding whose range is $\Lipz([0,|K|])$. \par
Furthermore, for every $f \in L^\infty(K)$, 
\begin{equation} \label{deriv_Psi_J}
\frac{\rd }{\rd t} \Psi_K(J_K(f))(t) = f(\sigma_K(t)) \quad \text{for a.e. } t \in [0,|K|].
\end{equation}
 
\end{proposition}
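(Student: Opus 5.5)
The plan is to transport everything to the measure interval through the change of variables $s=\pi_K(x)$, and then to check linearity, isometry, range and the derivative formula there. Linearity is immediate. Isometry follows from the facts already recorded: $J_K f\in X_K$ and $\Psi_K$ is an isometry on $X_K$, so
\[
\normi{\Psi_K J_K f}=\normi{J_K f}=\normA{f}.
\]
Injectivity comes for free from the isometry.

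The key identity I will establish is
\[
\Psi_K(J_K f)(t)=\int_0^t f(\sigma_K(s))\,\rd s,\qquad t\in[0,|K|],
\]
where $f\circ\sigma_K$ is measurable and essentially bounded. I will obtain it from the pushforward relation $(\pi_K)_\#(\lambda|_K)=\lambda|_{[0,|K|]}$. To prove that relation, note that both sides are measures, and by Lemma~\ref{lem:Pi_properties} and the definition of $\pi_K$ they agree on sets $[0,t]$. Indeed, $\pi_K^{-1}([0,t])=[\min K,x]\cap K$ with $x=\max\pi_K^{-1}(t)$, which has measure $t$, so the two measures agree on all Borel sets.

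Next I will show that $\sigma_K\circ\pi_K=\mathrm{id}$ holds $\lambda$-a.e.\ on $K$. The exceptional set is contained in $\pi_K^{-1}(E_K)$, a countable union of fibers. Each fiber is either a single point or a two-point set $\{\ell,r\}$ of gap endpoints, since $\Pi_K$ is strictly increasing off gaps on the support. Hence by Lemma~\ref{lem:properties_E_K} the exceptional set is a null set. Given this, for a bounded Borel representative $f$ we get $f=(f\circ\sigma_K)\circ\pi_K$ a.e.\ on $K$, and the change-of-variables formula for the pushforward gives
\[
\int_{[\min K,x]\cap K}f\,\rd\lambda=\int_0^{\pi_K(x)}f\circ\sigma_K\,\rd s .
\]
Evaluating at $x=\sigma_K(t)$ and using $\pi_K(\sigma_K(t))=t$ (Lemma~\ref{lem:left-selector}) yields the identity. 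Measurability of $f\circ\sigma_K$ holds because $\sigma_K$ is monotone, hence Borel. Independence of the chosen representative uses that $\sigma_K$ pulls null sets back to null sets, which is exactly the pushforward relation read in reverse on $\sigma_K([0,|K|])$. This a.e./null-set bookkeeping is where I expect the main care to be needed.

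The identity immediately gives that $\Psi_K J_K f$ is Lipschitz with constant $\normi{f}$ and vanishes at $0$. Lebesgue differentiation then gives \eqref{deriv_Psi_J}. For surjectivity onto $\Lipz([0,|K|])$, take $G\in\Lipz([0,|K|])$, so $g:=G'\in L^\infty[0,|K|]$ and $G(t)=\int_0^t g$. Define $f:=g\circ\pi_K\in L^\infty(K)$. Then $f\circ\sigma_K=g\circ\pi_K\circ\sigma_K=g$ everywhere, so the identity gives $\Psi_K J_K f=G$. This completes the plan.
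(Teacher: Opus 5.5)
Your strategy works, but one step rests on a false claim. You assert that every fiber of $\pi_K$ is a single point or a two-point set $\{\ell,r\}$ of gap endpoints, because $\Pi_K$ is ``strictly increasing off gaps''. In fact the level set $\{\Pi_K=t\}$ is a closed interval that may contain many gaps together with a null, possibly uncountable, part of $K$.

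The paper's own examples already contradict the claim. In Example~\ref{ex:fiber_order_type} the fiber $\pi_M^{-1}(1)=\{1,2,3\}$ has three points. In Example~\ref{ex:fiber_not_gap} the fiber $\pi_K^{-1}(0)=\{0\}\cup\{1/n:n\in\N\}$ is infinite. If $C$ is the middle-thirds Cantor set and $K=C\cup[2,3]$, then $\pi_K^{-1}(0)=C\cup\{2\}$ is uncountable. So $\pi_K^{-1}(E_K)$ need not be countable, and the inference ``countably many fibers with at most two points each, hence null'' does not go through.

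The conclusion $\sigma_K\circ\pi_K=\mathrm{id}$ a.e.\ on $K$ is still true, and you can repair it with the tool you have just proved. The pushforward relation gives $\lambda(K\cap\pi_K^{-1}(B))=\lambda(B)$ for every Borel $B\subseteq[0,|K|]$. Hence $\lambda(K\cap\pi_K^{-1}(E_K))=\lambda(E_K)=0$, because $E_K$ is countable. Equivalently, each fiber is $K\cap[a,b]$ with $a,b$ its extreme points, so its measure is $\pi_K(b)-\pi_K(a)=0$; this is the argument used in Lemma~\ref{lem:ae-equivalence}. The same a.e.\ identity also handles the indicator of $[\min K,x]$ in your change of variables, since $\{s:\sigma_K(s)\le x\}=[0,\pi_K(x)]$ exactly for $x\in K$. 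With this fix your proof is complete.

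Once repaired, your argument is a genuinely different route from the paper's. You prove the single formula $\Psi_K(J_Kf)(t)=\int_0^t f\circ\sigma_K$. In effect this shows that $f\mapsto f\circ\sigma_K$ identifies $L^\infty(K)$ with $L^\infty([0,|K|])$, with inverse $g\mapsto g\circ\pi_K$. The Lipschitz bound, the range and \eqref{deriv_Psi_J} are then all read off at once from the interval case.

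The paper's proof avoids pushforward measures and proceeds in three separate steps:
\begin{enumerate}
\item It gets the Lipschitz bound from $\lambda([\sigma_K(s),\sigma_K(t)]\cap K)=t-s$.
\item It proves surjectivity by comparing $A(x)=\int_{\min K}^x(g\circ\Pi_K)\1_K$ with $B=G\circ\Pi_K$ on $[\min K,\max K]$, using the chain rule and $\Pi_K'=\1_K$.
\item It extracts \eqref{deriv_Psi_J} from injectivity plus surjectivity ($f=h\circ\pi_K$ a.e.) and the fact that the Lipschitz map $\pi_K$ sends null sets to null sets.
\end{enumerate}
Your version is more unified. It also makes explicit null-set facts the paper leaves implicit, for instance that $g\circ\pi_K$ is a well-defined class in $L^\infty(K)$. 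The paper's version stays within the interval-extension and chain-rule machinery it reuses in the proof of Theorem~\ref{thm:ABS}.
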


\begin{proof}
The maps $J_K$ and $\Psi_K$ are linear, and so is their composition. Moreover, given $f \in L^\infty(K)$,
\begin{align*}
\sup_{t \in [0,|K|]} |\Psi_K(J_K f)(t)| & = \sup_{t \in [0,|K|]} |J_K f(\sigma_K(t))| = \sup_{x \in K} |J_K f (x)| = \| f\|_A
\end{align*}
and hence $\Psi_K \circ J_K$ is an isometry. \par 
Next, we prove that 
\[
\Psi_K\big(J_K(\Linf(K))\big)=\Lipz([0,|K|]).
\]
First, fix $f\in L^\infty(K)$ and note that since $\sigma_K(0)=\min K$, we have $\Psi_K(J_K f)(0) = J_K f(\min K)=0$. Moreover, for $0\leq s<t\leq |K|$, monotonicity of $\sigma_K$ yields $\sigma_K(s)\le \sigma_K(t)$; hence
\begin{align*}
|\Psi_K(J_K f)(t)-\Psi_K(J_K f)(s)| &=\Big|\int_{[\min K,\sigma_K(t)]\cap K} f - \int_{[\min K,\sigma_K(s)]\cap K} f\Big| \\
& = \Big|\int_{[\sigma_K(s),\sigma_K(t)]\cap K} f\Big|\\
&\le \|f\|_\infty\,\lambda\big([\sigma_K(s),\sigma_K(t)]\cap K\big).
\end{align*}
By the definition of $\pi_K$ and the fact that $\pi_K(\sigma_K(t))=t$,
\[
\lambda\big([\sigma_K(s),\sigma_K(t)]\cap K\big)
= \pi_K(\sigma_K(t))-\pi_K(\sigma_K(s))=t-s.
\]
Thus \[|\Psi_K(J_K f)(t)-\Psi_K(J_K f)(s)|\le \|f\|_\infty\,(t-s)\] for all $0\le s<t\le |K|$, so $\Psi_K(J_K f)\in \Lipz([0,|K|])$.
Therefore \[\Psi_K(J_K(\Linf(K)))\subseteq \Lipz([0,|K|]).\]\par
For the reverse inclusion, we let $G\in\Lipz([0,|K|])$ and note that there exists $g\in L^\infty([0,|K|])$ with
\[
G(t)=\int_0^t g(s) \, \rd s, \qquad 0 \leq t \leq |K|.
\]
Equivalently, $G^\prime = g$ a.e.\ in $[0,|K|]$. Define $f:=g  \circ  \pi_K\in L^\infty(K)$, and consider the following absolutely continuous functions on \([\min K,\max K]\):
\[
A(x)=\int_{\min K}^{x} (g \circ \Pi_K) \,\mathbf 1_K,\qquad
B(x)=G\big(\Pi_K(x)\big).
\]
Using that $\Pi_K'=\mathbf 1_K$ a.e., we get
\[
A'(x)=g(\Pi_K(x))\,\mathbf 1_K(x)=G'(\Pi_K(x))\,\Pi_K'(x)=B'(x)
\]
for a.e.\ $x \in [\min K, \max K]$, and $A(\min K)=B(\min K)=G(0)=0$. Hence $A=B$ on $[\min K,\max K]$. \par 
For any $t\in[0,|K|]$,
\begin{align*}
\Psi_K(J_K f)(t) & =J_K f(\sigma_K(t)) =\int_{[\min K,\sigma_K(t)]\cap K} g  \circ  \pi_K \\
& =A\big(\sigma_K(t)\big)=B\big(\sigma_K(t)\big) \\
& =G\big(\Pi_K(\sigma_K(t))\big)=G(t),
\end{align*}
because $\Pi_K=\pi_K$ on $K$, and $\pi_K(\sigma_K(t))=t$.
Therefore $G=\Psi_K(J_K f)$, establishing $\Lipz([0,|K|])\subseteq \Psi_K(J_K(\Linf(K)))$. Combining the two inclusions proves the claim. 

It remains to prove \eqref{deriv_Psi_J}. Let $H=\Psi_K(J_K f)\in\Lipz([0,|K|])$, and let $h\in L^\infty([0,|K|])$ denote its a.e.\ derivative, so that $H'=h$ a.e.\ on $[0,|K|]$.
By the surjectivity part just proved,
\[
H=\Psi_K(J_K(h\circ\pi_K)).
\]
Since $\Psi_K\circ J_K$ is injective, it follows that
\[
f=h\circ\pi_K \qquad \text{a.e.\ on } K.
\]
Let
\[
N=\{x\in K:\, f(x)\neq h(\pi_K(x))\}.
\]
Then $|N|=0$. Since $\pi_K$ is Lipschitz, also $|\pi_K(N)|=0$.

Let
\[
D=\{t\in[0,|K|]:\, H'(t)\ \text{exists and } H'(t)=h(t)\}.
\]
Then $|[0,|K|]\setminus D|=0$. Also, if $t\notin \pi_K(N)$, then necessarily
$\sigma_K(t)\notin N$, since otherwise
\[
t=\pi_K(\sigma_K(t))\in \pi_K(N),
\]
a contradiction. Therefore, for every
\[
t\in D\cap \big([0,|K|]\setminus \pi_K(N)\big),
\]
we have
\[
H'(t)=h(t)=h(\pi_K(\sigma_K(t)))=f(\sigma_K(t)).
\]
Since this intersection has full measure in $[0,|K|]$, \eqref{deriv_Psi_J} follows.
\end{proof}

\section{Representation of surjective isometries} \label{sec:proof_main}

Let $K,M\subseteq\R$ be compact with $|K|,|M|>0$.
Equip $\Linf(K),\Linf(M)$ with their Alexiewicz norms via $J_K,J_M$.
\par
We will use the following variant of the classical Banach--Stone theorem whose proof can be found in~\cite{de2025banach}.

\begin{lemma} \label{lem:banach_stone}
Let $a,b>0$ and $\widehat U:\Czo{a}\to \Czo{b}$ be a surjective linear isometry. Then there exist
$\varepsilon\in\{\pm1\}$ and a homeomorphism $\psi:[0,b]\to[0,a]$ with $\psi(0)=0$
such that
\[
\widehat U (G)=\varepsilon\,G \circ \psi
\]
for all $G \in \Czo{a}$.
\end{lemma}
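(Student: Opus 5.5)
The plan is to run the classical Arens--Kelley argument on the dual ball. By the Riesz representation theorem, $\Czo{a}^*$ is isometric to the space of finite signed Radon measures on $(0,a]$ with the total variation norm. Here $\Czo{a}$ is viewed as $C_0$ of the locally compact space $(0,a]$, since its elements vanish at $0$. The extreme points of the closed dual unit ball are therefore exactly $\{\pm\delta_t : t\in(0,a]\}$. The adjoint $\widehat U^*:\Czo{b}^*\to\Czo{a}^*$ is a surjective weak$^*$-continuous linear isometry, so it maps extreme points of the dual ball bijectively onto extreme points. Hence for each $s\in(0,b]$ there are $\varepsilon(s)\in\{\pm1\}$ and $\psi(s)\in(0,a]$ with
\[
\widehat U^*\delta_s=\varepsilon(s)\,\delta_{\psi(s)} .
\]
Evaluating gives $\widehat U(G)(s)=\langle\widehat U^*\delta_s,G\rangle=\varepsilon(s)\,G(\psi(s))$ for all $s\in(0,b]$.

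Next I would analyse the weak$^*$ topology on $\mathcal E_a:=\{\pm\delta_t:t\in(0,a]\}$. The map $(\eta,t)\mapsto\eta\delta_t$ is a homeomorphism from $\{\pm1\}\times(0,a]$ onto $\mathcal E_a$. Moreover, $\eta_n\delta_{t_n}\to 0$ weak$^*$ exactly when $t_n\to0$, because $G(t_n)\to0$ forces $t_n\to0$ once we test against some $G\in\Czo{a}$ with $G(t)\neq0$. The two pieces $\{+\delta_t\}$ and $\{-\delta_t\}$ are disjoint relatively clopen subsets of $\mathcal E_a$. This holds because a weak$^*$ limit in $\mathcal E_a$ of $+\delta_{t_n}$ must be $+\delta_t$, as seen by testing against a positive bump function.

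The map $s\mapsto\widehat U^*\delta_s$ is weak$^*$-continuous from $(0,b]$ into $\mathcal E_a$, and $(0,b]$ is connected. It follows that $\varepsilon(s)\equiv\varepsilon$ is constant and that $\psi:(0,b]\to(0,a]$ is continuous. Applying the same reasoning to $\widehat U^{-1}$ shows that $\psi$ is a bijection with continuous inverse.

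Finally I would set $\psi(0)=0$ and check continuity at $0$. If $s_n\to0$, then $\delta_{s_n}\to0$ weak$^*$ in $\Czo{b}^*$, so $\varepsilon\delta_{\psi(s_n)}=\widehat U^*\delta_{s_n}\to0$. By the remark above this gives $\psi(s_n)\to0$, and the symmetric argument handles $\psi^{-1}$. Thus $\psi:[0,b]\to[0,a]$ is a homeomorphism with $\psi(0)=0$, and $\widehat U(G)=\varepsilon\,G\circ\psi$ for every $G\in\Czo{a}$; this also holds at $s=0$ trivially.

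I expect the main technical care to lie in the topological step. One must verify that the sign pieces of $\mathcal E_a$ really separate, and handle the behaviour near the deleted point $0$. The remaining difficulty, the identification of the extreme points of the dual ball, is standard measure theory.
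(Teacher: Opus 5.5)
Your argument is correct. The paper itself gives no proof of this lemma; it quotes it from De Pauw~\cite{de2025banach}. So your proposal provides a self-contained proof where the paper relies on the literature.

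Your route is the Arens--Kelley extreme-point proof of the Banach--Stone theorem for $C_0(X)$, applied to the locally compact space $X=(0,a]$. You add two observations that are exactly what separate the lemma from the textbook statement:
\begin{itemize}
\item The relatively clopen splitting of $\mathcal{E}_a$ by sign, together with connectedness of $(0,b]$, forces the sign $s\mapsto\varepsilon(s)$ to be constant. This is why no nontrivial continuous weight appears.
\item The fact that $\delta_{s_n}\to0$ weak$^*$ precisely when $s_n\to0$ lets you extend $\psi$ continuously by $\psi(0)=0$. In effect you pass to the one-point compactifications $[0,b]$ and $[0,a]$.
\end{itemize}
The citation keeps the paper short. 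Your version makes transparent why the representation has the special form $\varepsilon\,G\circ\psi$ with $\psi(0)=0$.

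A few small tidy-ups:
\begin{itemize}
\item In your remark on weak$^*$ convergence to $0$, the justification is vague about which $t$ is meant. Simply test against $G(t)=t\in\Czo{a}$, which gives $|t_n|=|\langle\eta_n\delta_{t_n},G\rangle|\to0$ directly.
\item At the end, $\psi:[0,b]\to[0,a]$ is a continuous bijection between compact Hausdorff spaces. It is therefore automatically a homeomorphism, and the symmetric argument for $\psi^{-1}$ is not needed.
\item Your use of sequences rather than nets in the weak$^*$ topology is legitimate. Since $\Czo{a}$ is separable, the weak$^*$ topology on the dual unit ball, which contains $\mathcal{E}_a$, is metrizable.
\end{itemize}
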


We proceed with the proof of the theorem.

\begin{proof}[Proof of Theorem~\ref{thm:ABS}]
Assume that $T : \Linf(K) \to \Linf(M)$ is a surjective isometry and define
\[
U=(\Psi_M\circ J_M) \circ T \circ (\Psi_K\circ J_K)^{-1}: \Lipz([0,|K|])\to \Lipz([0,|M|]).
\]
By Proposition~\ref{prop:embed}, $\Psi_K\circ J_K$ and $\Psi_M\circ J_M$
are surjective linear isometries onto $\Lipz([0,|K|])$ and $\Lipz([0,|M|])$, respectively; hence $U$ is a well-defined surjective linear isometry.
Since $\Lipz([0,|K|])$ and $\Lipz([0,|M|])$ are dense in $\Czo{|K|}$ and $\Czo{|M|}$, respectively, $U$ extends uniquely to a surjective linear isometry
\[
\widehat U: \Czo{|K|} \to \Czo{|M|}.
\]
By Lemma~\ref{lem:banach_stone}, there exist $\varepsilon\in\{\pm 1\}$ and a homeomorphism
$\psi:[0,|M|]\to[0,|K|]$ with $\psi(0)=0$ such that
\[
\widehat U (G) =\varepsilon\, G \circ \psi \qquad\text{for all }G\in C_0([0,|K|]).
\]
Take $G(t)=t\in\Lipz([0,|K|])$. Since $G$ lies in the domain of $U$, we have $U(G)=\widehat U(G)$, and hence
\[
U(G)=\varepsilon(G\circ\psi)=\varepsilon\,\psi.
\]
Therefore $\psi\in \Lipz([0,|M|])$. Applying the same argument to $U^{-1}$ shows that $\psi^{-1}$ is Lipschitz.
Thus $\psi$ is a lipeomorphism. Since $\psi$ is a homeomorphism $[0,|M|]\to[0,|K|]$ with $\psi(0)=0$,
it must be increasing.

Let $f\in L^\infty(K)$ and set $F= J_K f$, $G= \Psi_K(F)=F\circ\sigma_K\in\Lipz([0,|K|])$.
Then
\[
(\Psi_M \circ J_M)(Tf)\ =\ U(G)\ =\ \varepsilon\,G\circ\psi.
\]
Applying $\Phi_M$ (i.e.\ composing with $\pi_M$) and using $\Phi_M\circ\Psi_M=\mathrm{id}_{X_M}$,
$\pi_M\circ\sigma_M=\mathrm{id}_{[0,|M|]}$, and $G=J_K f\circ\sigma_K$, we obtain on $M$:
\begin{align*}
J_M(Tf) & = \Phi_M(\Psi_M J_M(Tf)) \\
& = \varepsilon\,(G\circ\psi)\circ\pi_M \\
& =\varepsilon\,(J_K f)\circ \sigma_K\circ \psi\circ \pi_M \\
& =\varepsilon\,(J_K f)\circ \phi,
\end{align*}
which is \eqref{thm_J_represent} with $\phi$ given by \eqref{phi}.

For the pointwise formula \eqref{thm_T_represent}, fix $f\in L^\infty(K)$ and set
\[
Q\coloneqq \varepsilon\,(J_K f)\circ\phi \quad\text{on }M.
\]
To justify differentiation, we extend $J_M(Tf)$ and $Q$ to the whole interval $[\min M,\max M]$.

\smallskip
\textbf{Step 1: Auxiliary extensions.}
Define the interval extension of $J_M$ by
\[
\widetilde J_M(g)(\tilde{y})\coloneqq \int_{[\min M,\tilde{y}]\cap M} g,
\qquad \tilde{y}\in[\min M,\max M],
\]
and set
\[
H(\tilde{y})\coloneqq \widetilde J_M(Tf)(\tilde{y}),\qquad \tilde{y}\in[\min M,\max M].
\]
Then $H$ is absolutely continuous on $[\min M,\max M]$, and by the fundamental theorem of calculus for
Lebesgue integrals,
\begin{equation}\label{eq:Hprime}
H'(\tilde{y})=(Tf)(\tilde{y})\,\1_M(\tilde{y})\qquad\text{for a.e.\ }\tilde{y}\in[\min M,\max M].
\end{equation}

Next, set $\widetilde G \coloneqq \Psi_K(J_K f)\in\Lipz([0,|K|])$ and define for $\tilde y\in[\min M,\max M]$
\[
\bar Q(\tilde y)\coloneqq \varepsilon\,(J_K f)(\sigma_K(\psi(\Pi_M(\tilde y))))
=\varepsilon\,\widetilde G(\psi(\Pi_M(\tilde y))).
\]
Since $\Pi_M$ and $\psi$ are Lipschitz and $\widetilde G$ is Lipschitz, $\bar Q$ is absolutely continuous on
$[\min M,\max M]$.

\smallskip
\textbf{Step 2: Identification $\bm{H=\bar Q}$ on the interval.} On $M$ we have $\Pi_M=\pi_M$, hence $\bar Q|_M=Q$.
Also, by \eqref{thm_J_represent} and the definition of $\widetilde J_M$, we have $H|_M=J_M(Tf)=Q$.
Thus $H=\bar Q$ on $M$.
\par 
Finally, both $H$ and $\bar Q$ are constant on each gap of $M$ (because $H$ depends only on
$[\min M,\tilde{y}]\cap M$, and $\Pi_M$ is constant on gaps), so equality on $M$ implies equality on the whole interval:
\[
H=\bar Q\qquad\text{on }[\min M,\max M].
\]
Consequently,
\begin{equation}\label{eq:HprimeQprime}
H'=\bar Q'\qquad\text{a.e.\ on }[\min M,\max M].
\end{equation}

\smallskip
\textbf{Step 3: Differentiation of $\bm{\bar Q}$ and restriction to $\bm{M}$.}
Using \eqref{deriv_Psi_J}, we have
$\widetilde G'(t)=f(\sigma_K(t))$ for a.e.\ $t\in[0,|K|]$. Since
$\bar Q(t)=\varepsilon\,\widetilde G(\psi(\Pi_M(t)))$, the chain rule for absolutely continuous maps
and $\Pi_M'=\1_M$ a.e.\ on $[\min M, \max M]$ yield, for a.e.\ $\tilde{y}\in[\min M,\max M]$,
\begin{align*}
\bar Q'(\tilde{y})&=\varepsilon\,\widetilde G'\big(\psi(\Pi_M(\tilde{y}))\big)\,\psi'(\Pi_M(\tilde{y}))\,\Pi_M'(\tilde{y}) \\
& =\varepsilon\,f\big(\sigma_K(\psi(\Pi_M(\tilde{y})))\big)\,\psi'(\Pi_M(\tilde{y}))\,\1_M(\tilde{y}).
\end{align*}
Restricting to a.e.\ $y\in M$ (so $\Pi_M(y)=\pi_M(y)$ and $\1_M(y)=1$) gives
\begin{equation}\label{eq:Qprime_on_M}
\bar Q'(y)=\varepsilon\,(f\circ\phi)(y)\,(\psi'\circ\pi_M)(y)\qquad\text{for a.e.\ }y\in M.
\end{equation}
Comparing \eqref{eq:Hprime}--\eqref{eq:HprimeQprime} with \eqref{eq:Qprime_on_M} yields
\[
(Tf)(y)=\varepsilon\,(f\circ\phi)(y)\,(\psi'\circ\pi_M)(y)
\qquad\text{for a.e.\ }y\in M,
\]
which is \eqref{thm_T_represent}.

\medskip
Conversely, fix $\varepsilon\in\{\pm1\}$ and an increasing lipeomorphism
$\psi:[0,|M|]\to[0,|K|]$, and define $T$ by \eqref{thm_T_represent} with
$\phi= \sigma_K\circ\psi\circ\pi_M$. Since $\psi'\in L^\infty([0,|M|])$,
we have $Tf\in L^\infty(M)$ for every $f\in L^\infty(K)$.

Let $f\in L^\infty(K)$ be arbitrary. Consider the auxiliary interval functions
$H$ and $\bar Q$ defined in Steps~1--2 above. Then $H$ and $\bar Q$ are absolutely continuous, and the computation of Step~3 yields $H'=\bar Q'$ a.e.\ on
$[\min M,\max M]$. Since $H(\min M)=\bar Q(\min M)=0$, it follows that
$H=\bar Q$ on $[\min M,\max M]$. Restricting to $M$ gives
\[
J_M(Tf)(y)=H(y)=\bar Q(y)=\varepsilon\,(J_K f)(\phi(y))
\qquad (y\in M),
\]
which establishes \eqref{thm_J_represent}. Using $\widetilde G = \Psi_K(J_K f)=(J_K f)\circ\sigma_K$
we have, for $y\in M$,
\[
J_M(Tf)(y)=\varepsilon\,(J_K f)(\sigma_K(\psi(\pi_M(y))))
=\varepsilon\,\widetilde G(\psi(\pi_M(y))),
\]
which, due to the surjectivity of $\pi_M$ and $\psi$, implies
\[
\|Tf\|_{\mathrm A}=\sup_{y\in M}|J_M(Tf)(y)|
=\sup_{t\in[0,|K|]}|\widetilde G(t)|.
\]
Since $J_K f=\widetilde G\circ\pi_K$ on $K$ and $\pi_K(K)=[0,|K|]$, it follows that
\[
\sup_{t\in[0,|K|]}|\widetilde G(t)|=\sup_{x\in K}|J_K f(x)|=\|f\|_{\mathrm A}.
\]
Therefore $\|Tf\|_{\mathrm A}=\|f\|_{\mathrm A}$, so $T$ is an isometry. That $T$ is linear is clear from the representation formula~\eqref{thm_T_represent}.
\par
Surjectivity follows by symmetry: applying the same construction with $K,M$
interchanged and $\psi^{-1}$ in place of $\psi$ yields an inverse isometry.
\end{proof}

\begin{remark}\label{rem:isometric_consequence}
As a direct consequence of Theorem~\ref{thm:ABS}, if $K,M\subseteq\R$ are compact sets of positive Lebesgue measure, then $(L^\infty(K),\normA{\cdot})$ and $(L^\infty(M),\normA{\cdot})$ are linearly isometric. The subsequent sections address the finer geometric question of when the interval-level change of variables lifts to a homeomorphism or a lipeomorphism between the underlying compact sets.
\end{remark}

\section{Fiber \texorpdfstring{$\psi$}{\unichar{"03C8}}-compatibility and homeomorphic \texorpdfstring{$\phi$}{\unichar{"03D5}}} \label{sec:fiber_compat}

In this section we determine when an increasing lipeomorphism $\psi:[0,|M|]\to[0,|K|]$ lifts to an increasing homeomorphism $\phi:M\to K$ satisfying $\pi_K\circ\phi=\psi\circ\pi_M$. To this end, we introduce \emph{fiber $\psi$-compatibility}, which encodes the necessary and sufficient matching of exceptional fibers of $\pi_M$ and $\pi_K$. The proof of sufficiency constructs a lift first on a convenient dense subset $S_M\subseteq M$ (defined below) and then extends it to all of $M$ by monotonicity.

Let $\Q$ denote the rationals. Define
\begin{equation} \label{set_Q_M}
Q_M \coloneqq \big(\Q \cap [0, |M|]\big) \cup \{|M|\} \cup E_M 
\end{equation}
and 
\begin{equation} \label{set_S_M}
S_M \coloneqq \pi_M^{-1}(Q_M).
\end{equation}

\begin{lemma} \label{lem:S_M_dense}
The set $S_M$ is a dense subset of $M$.
\end{lemma}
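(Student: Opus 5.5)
We need to show that every $x\in M$ is a limit of points of $S_M$, with $S_M=\pi_M^{-1}(Q_M)$ and $Q_M\supseteq \Q\cap[0,|M|]$. The plan is a case split on whether $x$ is an isolated point of $M$ or an accumulation point; the accumulation case is the main one.

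\textbf{Isolated points.} Suppose first that $x$ is an isolated point of $M$. Then there are gaps $(\ell,x)$ and $(x,r)$, or $x$ is an endpoint $\min M$ or $\max M$ adjacent to a single gap. Since $\Pi_M$ is constant on each gap (Lemma~\ref{lem:Pi_properties}), we get $\pi_M(\ell)=\pi_M(x)$ whenever $\ell$ exists. Hence, by Lemma~\ref{lem:properties_E_K}(i), $\pi_M(x)\in E_M$, so $x\in S_M$. If $x=\min M$, then $\pi_M(x)=0\in\Q$; if $x=\max M$, then $\pi_M(x)=|M|\in Q_M$. In both cases $x\in S_M$ directly.

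\textbf{Accumulation points: $\pi_M$ not locally constant.} Now let $x$ be an accumulation point of $M$, and let $\delta>0$. Suppose first that $M\cap(x,x+\delta)$ is nonempty. I claim that $\pi_M$ cannot be constant on $M\cap[x,x+\delta)$. Otherwise $|M\cap(x,x+\delta)|=0$; since $\pi_M$ is nondecreasing and continuous, every point $y\in M\cap(x,x+\delta)$ would lie in the fiber $\pi_M^{-1}(\pi_M(x))$. That fiber is an interval of $M$ of measure zero, so it is contained in $\{\ell,r\}$ for a single gap (Lemma~\ref{lem:properties_E_K}(i)). This means at most two points. The symmetric argument applies on the left.

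I expect this to be the main obstacle: one has to argue carefully that a nontrivial fiber has exactly two points, namely $\pi_M^{-1}(t)=[a,b]\cap M$ with $|[a,b]\cap M|=0$. If this set contained three points, there would be two gaps, and the middle point would be isolated with a measure-zero neighborhood, which needs ruling out. The resolution is the following: the fiber equals $M\cap[a,b]$ with $\lambda(M\cap[a,b])=0$. If it contained points other than $a,b$, choose such a point $y$. Then $y$ lies in the fiber, so $t\in E_M$ and the fiber meets $S_M$ regardless. So for density we may simply observe that every point of a nontrivial fiber lies in $S_M$. This removes the obstacle: if $\pi_M(x)\in E_M$ then $x\in S_M$ already.

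\textbf{Accumulation points: conclusion.} So assume $\pi_M^{-1}(\pi_M(x))=\{x\}$. Since $x$ is an accumulation point, pick $y\in M$ with $0<|y-x|<\delta$, say $y>x$ (the case $y<x$ is symmetric). Then $\pi_M(y)>\pi_M(x)$, because the fiber of $\pi_M(x)$ is the singleton $\{x\}$. Choose a rational $q\in(\pi_M(x),\pi_M(y))$. By surjectivity of $\pi_M$ (Lemma~\ref{lem:Pi_properties}), pick $z\in\pi_M^{-1}(q)$. Since $\pi_M$ is nondecreasing, $x<z<y$, and $z\in S_M$. Hence $S_M\cap(x-\delta,x+\delta)\neq\emptyset$ for every $\delta>0$, which proves density.
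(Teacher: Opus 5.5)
Your final argument is correct and is essentially the paper's proof. Points with a nontrivial fiber, including all isolated points, lie in $S_M$ outright; otherwise you take a nearby point of $M$ with a different $\pi_M$-value, a rational $q$ strictly between the two values, and a preimage of $q$, which monotonicity traps between them. The paper finds that preimage via the intermediate value theorem for $\Pi_M$ plus a gap endpoint, but this is the same step.

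You should, however, delete the second paragraph and the ``exactly two points'' discussion, because those claims are false, even though your conclusion does not use them. Nontrivial fibers can contain three or infinitely many points. For $M=\{0\}\cup\{1/n^2:n\in\N\}\cup[1,2]$, the fiber over $0$ is infinite, and $\pi_M$ is constant on $M\cap[0,\delta)$ for $\delta<1$ even though $0$ is an accumulation point. Lemma~\ref{lem:properties_E_K}(i) says nothing about the cardinality of a fiber.
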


\begin{proof}
Let $I\subseteq\R$ be an open interval with $I\cap M\neq\emptyset$. We show that
$I\cap S_M\neq\emptyset$.

If $I\cap M$ is a singleton $\{c\}$, then $c$ is an isolated point of $M$. Since $|M|>0$,
the set $M$ contains a point different from $c$, hence $c$ is an endpoint of at least
one gap. In particular, the fiber $\pi_M^{-1}(\pi_M(c))$ contains at least two points,
so $\pi_M(c)\in E_M\subseteq Q_M$, and therefore $c\in S_M$.

Assume now that $I\cap M$ contains at least two points, and choose $a<b$ in $I\cap M$.
If $\pi_M(a)=\pi_M(b)$, then $\pi_M(a)\in E_M\subseteq Q_M$ and hence $a\in I\cap S_M$.

Otherwise $\pi_M(a)<\pi_M(b)$. Choose $q\in(\Q\cap[0,|M|])\cap(\pi_M(a),\pi_M(b))\subseteq Q_M$.
Since $\Pi_M$ is continuous and nondecreasing on $[\min M,\max M]$ and $\Pi_M=\pi_M$ on $M$,
there exists $\tilde y\in[a,b]\subseteq I$ such that $\Pi_M(\tilde y)=q$.
If $\tilde y\in M$, then $\pi_M(\tilde y)=q\in Q_M$ and $\tilde y\in I\cap S_M$.
If $\tilde y\notin M$, then $\tilde y$ lies in a gap $(\ell,r)$ of $M$ contained in $(a,b)\subseteq I$,
and $\Pi_M(\ell)=\Pi_M(\tilde y)=q$ with $\ell\in M$. Thus $\ell\in I\cap S_M$.

In all cases $I\cap S_M\neq\emptyset$, hence $S_M$ is dense in $M$.
\end{proof}

\begin{definition}
Let $K,M\subseteq \R$ be compact sets of positive measure, and let $\psi:[0,|M|]\to[0,|K|]$ be an increasing lipeomorphism. The sets $K$ and $M$ are said to be \emph{fiber $\psi$-compatible} if 
\begin{equation}\label{eq:EK-match}
\psi(E_M)=E_K,
\end{equation}
and, for every $s\in E_M$, the fiber $\pi_M^{-1}(s)$ is order-isomorphic to the fiber $\pi_K^{-1}(\psi(s))$, i.e., there exists an increasing bijection 
\begin{equation} \label{eq:bijection_h_s}
h_s : \pi_M^{-1}(s) \to \pi_K^{-1}(\psi(s)).
\end{equation}
\end{definition}

\begin{example}[Failure of fiber compatibility: $E$-sets do not match]\label{ex:EK_mismatch}
Let
\[
K=[0,1],\qquad M=\left[ 0,\frac12 \right]\cup\{1\}.
\]
Then $|K|=1$ and $|M|=\frac12$. Moreover, $\pi_K$ is strictly increasing on $K$, hence every fiber is a
singleton and $E_K=\emptyset$.

On the other hand, $\pi_M(1/2)=\pi_M(1)=1/2$, so
\[
\pi_M^{-1}\left(\frac12\right)=\left\{\frac12,1\right\},
\qquad\text{and hence}\qquad
E_M=\left\{\frac12\right\}.
\]
Therefore, for any increasing lipeomorphism $\psi:[0,|M|]\to[0,|K|]$, we have
$\psi(E_M)=\{\psi(1/2)\}\neq\emptyset=E_K$, so $K$ and $M$ are not fiber $\psi$-compatible for any~$\psi$.
\end{example}

\begin{example}[Failure of fiber compatibility: fibers are not order-isomorphic]\label{ex:fiber_order_type}
Let
\[
K=[0,1]\cup\{2\},
\qquad
M=[0,1]\cup \{2\}\cup\{3\}.
\]
Then $|K|=|M|=1$ and one checks that
\[
E_K=E_M=\{1\}.
\]
Moreover,
\[
\pi_K^{-1}(1)=\{1,2\},
\qquad
\pi_M^{-1}(1)=\{1,2,3\}.
\]
Thus the fiber of $M$ over $1$ has three points whereas the corresponding fiber of $K$ has two points,
so there is no order-isomorphism between them. Hence $K$ and $M$ are not fiber $\psi$-compatible for any
increasing lipeomorphism $\psi:[0,|M|]\to[0,|K|]$ (indeed, necessarily $\psi(1)=1$).
\end{example}

\begin{theorem}\label{thm:homeo-lift}
Let $K,M\subseteq \R$ be compact sets of positive measure, and let $\psi:[0,|M|]\to[0,|K|]$ be an increasing lipeomorphism. There exists an increasing homeomorphism $\phi:M\to K$
satisfying
\begin{equation} \label{eq:pi_K_phi=psi_pi_M}
\pi_K\circ\phi=\psi\circ\pi_M
\end{equation}
if and only if $K$ and $M$ are fiber $\psi$-compatible. 
\end{theorem}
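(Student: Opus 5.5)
The plan is to prove necessity directly and to prove sufficiency by building $\phi$ fiber by fiber, then checking monotonicity and continuity.

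\emph{Necessity.} Suppose $\phi:M\to K$ is an increasing homeomorphism with $\pi_K\circ\phi=\psi\circ\pi_M$. Fix $s\in[0,|M|]$. Because $\psi$ is a bijection, the identity gives $\phi(\pi_M^{-1}(s))\subseteq\pi_K^{-1}(\psi(s))$. Applying the same argument to $\phi^{-1}$, which satisfies $\pi_M\circ\phi^{-1}=\psi^{-1}\circ\pi_K$, gives the reverse inclusion. Hence $\phi$ restricts to an increasing bijection $h_s:\pi_M^{-1}(s)\to\pi_K^{-1}(\psi(s))$. In particular the two fibers have the same cardinality, so $s\in E_M$ if and only if $\psi(s)\in E_K$. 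Since $\psi$ is onto, this yields $\psi(E_M)=E_K$, which is fiber $\psi$-compatibility.

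\emph{Sufficiency.} Assume fiber $\psi$-compatibility. By Lemma~\ref{lem:properties_E_K}(i), each fiber of $\pi_M$ or $\pi_K$ is either a singleton or has the form $\{\ell,\dots,r\}$: a compact set with $\ell$ and $r$ endpoints of gaps, and of measure zero. I will define $\phi$ on each fiber separately:
\[
\phi|_{\pi_M^{-1}(s)}:=h_s \quad (s\in E_M), \qquad \phi(y):=\sigma_K(\psi(\pi_M(y))) \quad (\pi_M(y)\notin E_M).
\]
When $s\notin E_M$ we have $\psi(s)\notin E_K$, so the target fiber is a singleton and the second formula is forced. This $\phi$ satisfies \eqref{eq:pi_K_phi=psi_pi_M} by construction. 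It is a bijection $M\to K$ because it maps fibers bijectively onto fibers and $\psi$ is bijective.

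$\phi$ is strictly increasing: if $y<y'$ lie in the same fiber, this is the monotonicity of $h_s$; if they lie in different fibers, then $\pi_M(y)<\pi_M(y')$, so $\psi\pi_M(y)<\psi\pi_M(y')$, and every point of the first target fiber lies below every point of the second, since $\pi_K$ is nondecreasing. The construction is symmetric (use $h_s^{-1}$ and $\psi^{-1}$), and it returns exactly $\phi^{-1}$, so both $\phi$ and $\phi^{-1}$ are increasing bijections.

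The step I expect to be the main obstacle is continuity. An increasing bijection between compact subsets of $\R$ is a homeomorphism, since it preserves suprema and infima of subsets, but this deserves a careful check. Suppose $y_n\uparrow y$ in $M$ and $\phi(y_n)\uparrow z\le\phi(y)$ with $z<\phi(y)$. Compactness of $K$ gives $z\in K$. Then $z=\phi(w)$ for some $w$, and monotonicity of $\phi^{-1}$ forces $y_n<w<y$ for all $n$, contradicting $y_n\to y$. The same argument handles limits from the right, and applied to $\phi^{-1}$ it gives continuity of the inverse.

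This argument never needs the dense set $S_M$ of Lemma~\ref{lem:S_M_dense}. If a subtlety turns up in the order-completeness argument, I would fall back on the paper's route: define $\phi$ on $S_M$ as above, where it agrees with the fiber bijections and with $\sigma_K\circ\psi\circ\pi_M$, and then extend to $M$ by $\phi(y)=\sup\{\phi(x):x\in S_M,\ x\le y\}$. Continuity would then be checked using the fact that $\phi(S_M)=S_K$ is dense in $K$, so the increasing extension has no jumps.
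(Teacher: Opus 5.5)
Your proof is correct. The necessity half is essentially the paper's, in slightly streamlined form: you show that $\phi$ maps every fiber $\pi_M^{-1}(s)$ onto $\pi_K^{-1}(\psi(s))$ and read off $\psi(E_M)=E_K$ from equal cardinalities. The paper instead first obtains $\psi(E_M)\subseteq E_K$ by injectivity, then the reverse inclusion via $\phi^{-1}$, and only then the fiber bijections.

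For sufficiency your route is more direct than the paper's. The paper proceeds as follows:
\begin{enumerate}[(1)]
\item It defines the fiberwise map $\phi_0$ only on the dense set $S_M=\pi_M^{-1}(Q_M)$.
\item It checks that $\phi_0$ is an order-preserving bijection onto $S_K^\psi$, which needs Lemma~\ref{lem:S_M_dense} and the density of $S_K^\psi$.
\item It extends by $\phi(y)=\sup\{\phi_0(z): z\in S_M,\ z\le y\}$, builds the symmetric extension $\tilde\phi$, and uses density again to show that the two are mutual inverses.
\item It recovers $\pi_K\circ\phi=\psi\circ\pi_M$ on all of $M$ only at the end, by continuity and density.
\end{enumerate}

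You observe that fiber $\psi$-compatibility already gives a bijection between \emph{every} pair of fibers $\pi_M^{-1}(s)$ and $\pi_K^{-1}(\psi(s))$. Singletons go to singletons because $\psi$ is injective and $\psi(E_M)=E_K$. So $\phi$ can be defined on all of $M$ at once. In effect, the paper's Step~2, run with $Q_M$ replaced by $[0,|M|]$ (so $S_M=M$ and $S_K^\psi=K$), yields an increasing bijection $M\to K$ that satisfies the identity by construction. Steps~1 and~3 and the closing density argument then become superfluous.

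Your continuity check is the same argument as the paper's Step~4, or equivalently the fact that an order isomorphism between compact subsets of $\R$ preserves suprema and infima. The paper's dense-set-and-extension scheme is a general device for extending order isomorphisms from dense subsets. Here, though, the fiber data is available at every level, so it buys nothing. Your version is shorter and gets $\pi_K\circ\phi=\psi\circ\pi_M$ for free.

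One harmless inaccuracy: you describe a nontrivial fiber as a compact set whose extreme points are endpoints of gaps. This is false in general. In Example~\ref{ex:fiber_not_gap}, $\min \pi_K^{-1}(0)=0$ is not an endpoint of any gap, and Lemma~\ref{lem:properties_E_K}(i) does not assert this. Since you never use it, simply drop it.
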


\begin{proof}
Assume first that there exists an increasing homeomorphism $\phi:M\to K$ satisfying~\eqref{eq:pi_K_phi=psi_pi_M}. Let $s \in E_M$. By definition, the fiber $\pi_M^{-1}(s)$ contains at least two distinct points, say $y_1 < y_2$. Since $\phi$ is injective, $\phi(y_1) \neq \phi(y_2)$. Condition \eqref{eq:pi_K_phi=psi_pi_M} implies
\[ \pi_K(\phi(y_1)) = \psi(\pi_M(y_1)) = \psi(s) = \psi(\pi_M(y_2)) = \pi_K(\phi(y_2)). \]
Thus, the fiber $\pi_K^{-1}(\psi(s))$ also contains at least two distinct points, so $\psi(s) \in E_K$. This implies $\psi(E_M) \subseteq E_K$.
Applying the same argument to $\phi^{-1}$ (using the relation $\pi_M \circ \phi^{-1} = \psi^{-1} \circ \pi_K$) shows that $\psi^{-1}(E_K) \subseteq E_M$, or equivalently $E_K \subseteq \psi(E_M)$.
Thus, $\psi(E_M) = E_K$, establishing condition \eqref{eq:EK-match}. \par 

Moreover, for each $s \in E_M$ we have $\phi(\pi_M^{-1}(s)) = \pi_K^{-1}(\psi(s))$. Indeed, if $y \in \pi_M^{-1}(s)$, i.e., $\pi_M(y) = s$, then by \eqref{eq:pi_K_phi=psi_pi_M} we deduce 
\[\pi_K(\phi(y)) = \psi(\pi_M(y)) = \psi(s), \]
and so $\phi(\pi_M^{-1}(s)) \subseteq \pi_K^{-1}(\psi(s))$. Conversely, if $x \in \pi_K^{-1}(\psi(s))$, since $\phi$ is surjective, there exists $y \in M$ such that $\phi(y)=x$. By \eqref{eq:pi_K_phi=psi_pi_M} we have 
\[\psi(\pi_M(y))=\pi_K(\phi(y))=\pi_K(x)=\psi(s), \]
and since $\psi$ is injective it follows that $\pi_M(y) = s$; hence $x = \phi(y) \in \phi(\pi_M^{-1}(s))$. Therefore, for each $s \in E_M$, the restriction of $\phi$ to the fiber $\pi_M^{-1}(s)$ is an increasing bijection onto $\pi_K^{-1}(\psi(s))$, and hence $\pi_M^{-1}(s)$ and $\pi_K^{-1}(\psi(s))$ are order-isomorphic. This proves \eqref{eq:bijection_h_s} and we conclude that $K$ and $M$ are fiber $\psi$-compatible.

\medskip
Assume now that $\psi$ is an increasing lipeomorphism and that $K$ and $M$ are fiber $\psi$-compatible. We construct $\phi$.

\smallskip
\textbf{Step 1: Dense subsets $\bm{S_M \subseteq M}$ and $\bm{S_K^\psi \subseteq K}$.}
Recall $Q_M$ and $S_M$ from \eqref{set_Q_M} and~\eqref{set_S_M}, 
\[
Q_M = (\Q\cap[0,|M|]) \cup \{|M|\} \cup E_M,\qquad S_M=\pi_M^{-1}(Q_M),
\]
and note that by Lemma~\ref{lem:S_M_dense}, $S_M$ is a dense subset of $M$. Define
\begin{equation} \label{set_Q_K_psi}
Q_K^\psi \coloneqq \psi(Q_M)\subseteq[0,|K|],
\end{equation}
and 
\begin{equation} \label{set_S_K_psi}
S_K^\psi \coloneqq \pi_K^{-1}(Q_K^\psi) \subseteq K.
\end{equation}

We claim that $S_K^\psi$ is dense in $K$.
Since $Q_M$ contains $\Q\cap[0,|M|]$ and $\psi$ is a homeomorphism, the set
$Q_K^\psi=\psi(Q_M)$ is dense in $[0,|K|]$.
Moreover, $E_K=\psi(E_M)\subseteq Q_K^\psi$ by~\eqref{eq:EK-match}.
Therefore, the argument of Lemma~\ref{lem:S_M_dense} (with $M$ replaced by $K$ and $Q_M$ replaced by $Q_K^\psi$)
shows that $\pi_K^{-1}(Q_K^\psi)=S_K^\psi$ is dense in $K$.

\smallskip
\textbf{Step 2: An order-preserving bijection $\bm{\phi_0: S_M \to S_K^\psi}$.} 
Let $y \in S_M$ and set $s = \pi_M(y)$. 

On the one hand, if $s \notin E_M$, then $s\in Q_M\setminus E_M$ and the fiber $\pi_M^{-1}(s)$ is the singleton~$\{y\}$. By \eqref{eq:EK-match}, $\psi(s) \notin E_K$, hence $\pi_K^{-1}(\psi(s))$ is also a singleton, say $\{x\}$. Since $s\in Q_M$, we have $\psi(s)\in Q_K^\psi$, so $x\in S_K^\psi$.
Define
\[
\phi_0(y)\coloneqq x.
\]
Then
\[
\pi_K(\phi_0(y))=\psi(s)=\psi(\pi_M(y)).
\]

On the other hand, if $s\in E_M$, then by the fiber $\psi$-compatibility, there exists an increasing bijection
\[
h_s : \pi_M^{-1}(s) \to \pi_K^{-1}(\psi(s)).
\]
Moreover $s\in Q_M$ and so $\psi(s)\in Q_K^\psi$.
We define for every $y \in \pi_M^{-1}(s)$,
\[
\phi_0(y)\coloneqq h_s(y).
\]
Then, for every $y\in\pi_M^{-1}(s)$,
\[
\pi_K(\phi_0(y))=\psi(s)=\psi\big(\pi_M(y)\big),
\]
and $\phi_0(y)\in S_K^\psi$.
\par
This defines $\phi_0$ on all of $S_M$, with values in $S_K^\psi$. By construction, for each $s\in Q_M$, the map $\phi_0$ induces a bijection
\[
\phi_0:\ \pi_M^{-1}(s)\to \pi_K^{-1}(\psi(s)).
\]
Since $\psi|_{Q_M}:Q_M\to Q_K^\psi$ is a bijection, it follows that $\phi_0:S_M\to S_K^\psi$ is a bijection.
Indeed, for every $y\in S_M$ we have $\pi_K(\phi_0(y))=\psi(\pi_M(y))$, and hence, if $\phi_0(y_1)=\phi_0(y_2)$ then
$\psi(\pi_M(y_1))=\psi(\pi_M(y_2))$; since $\psi$ is injective, $\pi_M(y_1)=\pi_M(y_2)=:s$.
Thus $y_1,y_2\in \pi_M^{-1}(s)$ and the fiberwise injectivity yields $y_1=y_2$.
Surjectivity follows similarly: given $x\in S_K^\psi$, let $t=\pi_K(x)\in Q_K^\psi$ and choose the unique $s\in Q_M$
with $\psi(s)=t=\pi_K(x)$; since $\phi_0:\pi_M^{-1}(s)\to\pi_K^{-1}(\psi(s))$ is surjective, there exists $y\in\pi_M^{-1}(s)\subseteq S_M$
such that $\phi_0(y)=x$.

Moreover, $\phi_0$ is strictly increasing on $S_M$. Indeed, let $y_1<y_2$ in $S_M$. If $\pi_M(y_1)=\pi_M(y_2)=s\in E_M$, then $y_1,y_2$ lie in the same fiber
$\pi_M^{-1}(s)$ and, by construction, $\phi_0$ restricts to the strictly increasing map $h_s$ on this fiber, so
\[
\phi_0(y_1) = h_s(y_1) < h_s(y_2) = \phi_0(y_2).
\]
If instead $\pi_M(y_1)\neq\pi_M(y_2)$, then by monotonicity of $\pi_M$ we have
\[
\pi_M(y_1)<\pi_M(y_2).
\]
Since $\psi$ is strictly increasing,
\[
\psi(\pi_M(y_1))<\psi(\pi_M(y_2)).
\]
Then, using the identity $\pi_K(\phi_0(y_i))=\psi(\pi_M(y_i))$, $i = 1,2$, we obtain
\[
\pi_K(\phi_0(y_1))<\pi_K(\phi_0(y_2)),
\]
which, by the monotonicity of $\pi_K$, forces $\phi_0(y_1)<\phi_0(y_2)$, proving the claim.

\smallskip
\textbf{Step 3: Extending $\bm{\phi_0}$ to an increasing bijection $\bm{\phi:M\to K}$.}
For $y\in M$ define
\[
\phi(y)\coloneqq
\sup\big\{\phi_0(z) \ | \ z\in S_M,\ z\le y\big\}.
\]
Clearly $\phi(M) \subseteq K$ and $\phi$ is increasing by construction.

If $y\in S_M$, then the order-preserving property of $\phi_0$ implies
\[
\sup\big\{ \phi_0(z) \  | \ z\in S_M,\ z\le y\big\}=\phi_0(y),
\]
so $\phi$ extends $\phi_0$, that is, $\phi(y)=\phi_0(y)$ for all $y\in S_M.$
\par
A symmetric construction, starting from the inverse bijection $\phi_0^{-1}:S_K^\psi\to S_M$, yields an increasing map
\[
\tilde{\phi}:K\to M
\]
extending $\phi_0^{-1}$. \par
We now prove that $\tilde{\phi}\circ\phi=\mathrm{id}_M$ and
$\phi\circ\tilde{\phi}=\mathrm{id}_K$. Fix $y\in M$. Since $S_M$ is dense in $M$, there exist sequences
$\{z_n^-\},\{z_n^+\}\subseteq S_M$ with
\[
z_n^- \le y \le z_n^+,\qquad
z_n^- \uparrow y,\quad z_n^+ \downarrow y.
\]
Because $\phi$ is increasing, we have
\[
\phi_0(z_n^-)\leq \phi(y)\leq \phi_0(z_n^+)
\]
for every $n$. Applying the increasing map $\tilde{\phi}$ to this inequality yields
\begin{equation}\label{eq:psi0-brackets}
\tilde{\phi}(\phi_0(z_n^-)) \leq  \tilde{\phi}(\phi(y)) \leq \tilde{\phi}(\phi_0(z_n^+)).
\end{equation}
By construction $\tilde{\phi}$ extends $\phi_0^{-1}$, so
\[
\tilde{\phi}(\phi_0(z_n^-)) = z_n^-,\qquad
\tilde{\phi}(\phi_0(z_n^+)) = z_n^+.
\]
Thus \eqref{eq:psi0-brackets} becomes
\[
z_n^- \leq \tilde{\phi}(\phi(y)) \leq  z_n^+ \qquad (n\in\mathbb N).
\]
Letting $n\to\infty$ and using $z_n^- \uparrow y$, $z_n^+ \downarrow y$ gives
\[
y  \leq \tilde{\phi}(\phi(y)) \leq y,
\]
hence $\tilde{\phi}(\phi(y))=y$. Since $y\in M$ is arbitrary, we have
\[
\tilde{\phi}\circ\phi=\mathrm{id}_M.
\]
A completely symmetric argument shows that $\phi\circ\tilde{\phi}=\mathrm{id}_K$. Therefore $\phi$ and $\tilde{\phi}$ are mutual inverses, so $\phi:M\to K$ is a bijection.

\smallskip
\textbf{Step 4: $\bm{\phi}$ is a homeomorphism satisfying $\bm{\pi_K \circ \phi = \psi \circ \pi_M}$.}
First, we prove that $\phi$ is continuous. Let $y \in M$ and let $\{y_n\} \subseteq M$ be a sequence such that $y_n \uparrow y$. Since $\phi$ is increasing, the sequence $\{\phi(y_n)\}$ is increasing and bounded above by $\phi(y)$. Let $L = \lim_{n \to \infty} \phi(y_n)$. Since $K$ is closed, $L \in K$. Because $\phi$ is surjective, there exists $z \in M$ such that $\phi(z) = L$.\par
Since $\phi(y_n) \leq L$ for all $n$, we have $\phi(y_n) \leq \phi(z)$, which implies $y_n \leq z$ for all $n$, and taking the limit yields $y \leq z$.
Conversely, $L \le \phi(y)$ implies $\phi(z) \le \phi(y)$, so $z \le y$.
Thus $z=y$, which implies $\lim_{n \to \infty} \phi(y_n) = \phi(y)$.
The argument for $y_n \downarrow y$ is analogous. Therefore, $\phi$ is continuous. Since $\phi$ is a continuous bijection from a compact space $M$ to a Hausdorff space $K$, it is a homeomorphism. \par 

Finally, we verify identity \eqref{eq:pi_K_phi=psi_pi_M}. By construction, for every $y\in S_M$ we have
\[
\pi_K\big(\phi(y)\big)
=\pi_K\big(\phi_0(y)\big)
=\psi\big(\pi_M(y)\big).
\]
The functions $\pi_K\circ\phi$ and $\psi\circ\pi_M$ are continuous on $M$. Since they agree on the dense subset $S_M$, they must agree everywhere:
\[
\pi_K\circ\phi=\psi\circ\pi_M\quad\text{on }M.
\]
This proves the existence of the required homeomorphism. Together with the necessity part, the theorem is proved.
\end{proof}

\begin{lemma}\label{lem:ae-equivalence}
Let $K,M \subseteq \R$ be compact sets of positive measure, and assume that $K$ and $M$ are fiber $\psi$-compatible for some increasing lipeomorphism $\psi:[0,|M|] \to [0,|K|]$. Let $\phi:M\to K$ be an increasing homeomorphism such that $\pi_K\circ\phi=\psi\circ\pi_M$, and set
\begin{equation*} 
\phi_\sigma \coloneqq \sigma_K \circ \psi \circ \pi_M.
\end{equation*}
Then $\phi=\phi_\sigma$ a.e.\ on $M$.
\end{lemma}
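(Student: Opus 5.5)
The plan is to show that $\phi$ and $\phi_\sigma$ can differ only at points of $M$ lying in the exceptional fibers $\pi_M^{-1}(E_M)$, and that this set is Lebesgue-null.

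First fix $y\in M$ with $s\coloneqq\pi_M(y)\notin E_M$. By fiber $\psi$-compatibility, \eqref{eq:EK-match} gives $\psi(s)\notin E_K$. Hence the fiber $\pi_K^{-1}(\psi(s))$ is a singleton. Two points of $K$ lie in this fiber:
\begin{itemize}
\item $\phi(y)$, because $\pi_K(\phi(y))=\psi(\pi_M(y))=\psi(s)$;
\item $\phi_\sigma(y)=\sigma_K(\psi(s))$, because $\pi_K\circ\sigma_K=\mathrm{id}$ by Lemma~\ref{lem:left-selector}.
\end{itemize}
They therefore coincide, and we get
\[
\{y\in M:\ \phi(y)\neq\phi_\sigma(y)\}\subseteq \pi_M^{-1}(E_M).
\]

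It remains to show that $\lambda(\pi_M^{-1}(E_M))=0$. By Lemma~\ref{lem:properties_E_K}, $E_M$ is countable, so it suffices to show that each fiber $\pi_M^{-1}(s)$ is null.

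Fix $s$ and let $a=\min\pi_M^{-1}(s)$ and $b=\max\pi_M^{-1}(s)$; both exist because the fiber is closed in the compact set $M$. Since $\pi_M$ is nondecreasing, the fiber is contained in $[a,b]\cap M$. On the other hand,
\[
\lambda([a,b]\cap M)=\pi_M(b)-\pi_M(a)=0
\]
by the definition of $\pi_M$ (the point $a$ itself has measure zero). So each fiber is null, and taking a countable union shows $\pi_M^{-1}(E_M)$ is null.

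Once fibers are seen as sets of measure zero, no step here is a real obstacle; the argument is direct.
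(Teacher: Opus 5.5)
Your proof is correct and follows essentially the same route as the paper. You show that $\phi$ and $\phi_\sigma$ agree off $\pi_M^{-1}(E_M)$, using $\psi(E_M)=E_K$ and the singleton fibers, and then that $\pi_M^{-1}(E_M)$ is null, since each fiber satisfies $\lambda([a,b]\cap M)=\pi_M(b)-\pi_M(a)=0$ and $E_M$ is countable. The only difference is that the paper also uses the order-isomorphism on the exceptional fibers to identify the disagreement set exactly as $\bigcup_{s\in E_M}\bigl(\pi_M^{-1}(s)\setminus\{\min\pi_M^{-1}(s)\}\bigr)$, whereas you rightly note that the inclusion $\{\phi\neq\phi_\sigma\}\subseteq\pi_M^{-1}(E_M)$ already suffices.
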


\begin{proof}
We show that the set 
\[
\mathcal{N} = \big\{y \in M \ | \ \phi(y) \neq \phi_\sigma(y)\big\}
\]
has measure zero. To that end, we prove that 
\begin{equation} \label{eq:aux1}
\mathcal{N} = \bigcup_{s\in E_M}\Big(\pi_M^{-1}(s)\setminus\big\{\min \pi_M^{-1}(s) \big\}\Big).
\end{equation}
Then the result follows because $E_M$ is countable and, for each $s\in E_M$,
the fiber $\pi_M^{-1}(s)$ has measure $0$ (indeed, $\pi_M^{-1}(s)=M\cap I_s$ where
$I_s=\{\Pi_M=s\}$ is a closed interval and $\lambda(M\cap I_s)=\Pi_M(\max I_s)-\Pi_M(\min I_s)=s-s=0$).
Hence the right-hand side of~\eqref{eq:aux1} is a countable union of null sets.
\par 
First, we write $\mathcal{N} = (\mathcal{N} \cap M_1) \cup (\mathcal{N} \cap M_2)$, where
\[M_1 = \big\{ y \in M \ | \ \pi_M(y) \notin E_M \big\} \]
and
\[M_2 =  \big\{ y \in M \ | \ \pi_M(y) \in E_M \big\}.\]
\par 
We claim that $\mathcal{N} \cap M_1 = \emptyset$. Indeed, if $y \in M$ with $s = \pi_M(y) \notin E_M$, then $\pi_M^{-1}(s)$ is the singleton $\{y \}$. By \eqref{eq:EK-match}, $\psi(s) \notin E_K$, so $\pi_K^{-1}(\psi(s))$ is also a singleton, say $\{z\}$. According to \eqref{eq:pi_K_phi=psi_pi_M} we must have $z = \phi(y)$. Moreover, $\phi_\sigma(y) = \sigma_K(\psi(s))=\min\{z\} = z$. Thus, $\phi(y) = \phi_\sigma(y)$, so $y \notin \mathcal{N}$, establishing the claim. 
\par 
Now, we observe that $M_2 = \pi_M^{-1}(E_M)$ and hence
\[\mathcal{N} \cap M_2 = \bigcup_{s \in E_M} \mathcal{N} \cap \pi_M^{-1}(s). \]
Let $s \in E_M$. By Theorem~\ref{thm:homeo-lift}, the restriction of $\phi$ to $\pi_M^{-1}(s)$ is an order-isomorphism onto $\pi_K^{-1}(\psi(s))$. Since order-isomorphisms preserve minima, we must have
    \[
      \phi\big(\min \pi_M^{-1}(s)\big) = \min \pi_K^{-1}(\psi(s)).
    \]
    On the other hand, for any $y \in \pi_M^{-1}(s)$, the selector map is defined as
    \[
      \phi_\sigma(y) = \sigma_K(\psi(s)) = \min \pi_K^{-1}(\psi(s)).
    \]
   Thus, for $y = \min \pi_M^{-1}(s)$, we have $\phi(y) = \min \pi_K^{-1}(\psi(s)) = \phi_\sigma(y)$; but for any $y \in \pi_M^{-1}(s) \setminus \{\min \pi_M^{-1}(s)\}$, the strict monotonicity of $\phi$ implies
          $\phi(y) > \phi(\min \pi_M^{-1}(s)) = \min \pi_K^{-1}(\psi(s)) = \phi_\sigma(y)$, so $\phi(y) \neq \phi_\sigma(y)$. \par 
          Therefore, on each fiber over $s\in E_M$, the two maps differ precisely at the points strictly greater than the minimum:
    \[
      \mathcal{N} \cap \pi_M^{-1}(s) = \pi_M^{-1}(s)\setminus\big\{\min \pi_M^{-1}(s)\big\},
    \]
 which establishes \eqref{eq:aux1} and finishes the proof.
\end{proof}

\begin{corollary}\label{cor:fiber_compatible_isometry}
Let $K,M \subseteq \R$ be compact sets of positive measure. Assume that there exists an increasing lipeomorphism $\psi:[0,|M|]\to[0,|K|]$ such that $K$ and $M$ are fiber $\psi$-compatible, and let $T:(L^\infty(K),\normA{\cdot})\to (L^\infty(M),\normA{\cdot})$ be the surjective linear isometry associated to $\psi$ as in Theorem~\ref{thm:ABS}. Then $T$ admits the representation
\[
(Tf)(y)=\varepsilon\, f(\phi(y))\,\psi'(\pi_M(y))
\quad\text{for a.e.\ }y\in M,
\]
where $\varepsilon \in \{\pm 1 \}$ and $\phi:M\to K$ is an increasing homeomorphism satisfying $\pi_K\circ\phi=\psi\circ\pi_M$.
\end{corollary}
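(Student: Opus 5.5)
The corollary combines Theorem~\ref{thm:ABS}, Theorem~\ref{thm:homeo-lift} and Lemma~\ref{lem:ae-equivalence}, so the plan is to chain these three results.

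Start from Theorem~\ref{thm:ABS}. The isometry $T$ associated to $\psi$ comes with a sign $\varepsilon\in\{\pm1\}$, and for every $f\in L^\infty(K)$,
\[
(Tf)(y)=\varepsilon\,f(\phi_\sigma(y))\,\psi'(\pi_M(y))\quad\text{for a.e.\ }y\in M,
\qquad \phi_\sigma=\sigma_K\circ\psi\circ\pi_M .
\]
Since $K$ and $M$ are fiber $\psi$-compatible, the sufficiency part of Theorem~\ref{thm:homeo-lift} provides an increasing homeomorphism $\phi:M\to K$ with $\pi_K\circ\phi=\psi\circ\pi_M$. Lemma~\ref{lem:ae-equivalence} then gives a null set $\mathcal N\subseteq M$ such that $\phi=\phi_\sigma$ on $M\setminus\mathcal N$.

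The only genuine point is that $f$ is merely an equivalence class, so one must check that $f\circ\phi$ is well defined a.e.\ and agrees with $f\circ\phi_\sigma$ a.e. Equality off $\mathcal N$ is immediate, since $f(\phi(y))=f(\phi_\sigma(y))$ there for any fixed representative of $f$. What remains is independence of the representative: if $f=g$ off a null set $Z\subseteq K$, we need $\phi^{-1}(Z)$ (equivalently $\phi_\sigma^{-1}(Z)$ up to $\mathcal N$) to be null in $M$.

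To prove this, note that off $\mathcal N$ we have $\phi^{-1}(Z)\subseteq \pi_M^{-1}(\psi^{-1}(\pi_K(Z)))$. Because $\pi_K$ is Lipschitz, $|\pi_K(Z)|=0$; because $\psi^{-1}$ is Lipschitz, $|\psi^{-1}(\pi_K(Z))|=0$. Finally, for a Borel null set $A\subseteq[0,|M|]$, the change of variables $\int_M \1_A(\pi_M)\,\rd\lambda=\int_0^{|M|}\1_A\,\rd t$ (that is, $\pi_M$ pushes $\lambda|_M$ to Lebesgue measure, by $\Pi_M'=\1_M$) gives $|\pi_M^{-1}(A)|=0$. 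Hence the preimage is null and $f\circ\phi$ is a well-defined element of $L^\infty(M)$ equal to $f\circ\phi_\sigma$ a.e.

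Substituting into the formula from Theorem~\ref{thm:ABS} gives the stated representation. The main obstacle is the measure-theoretic bookkeeping of the last step. Alternatively, one can bypass it: Lemma~\ref{lem:ae-equivalence} already says $\phi$ and $\phi_\sigma$ agree a.e., and the right-hand side of \eqref{thm_T_represent} is well defined by Theorem~\ref{thm:ABS}, so $f\circ\phi$ inherits well-definedness as an a.e.-defined function.
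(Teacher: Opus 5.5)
Your proposal is correct and follows the paper's proof: take $\phi$ from Theorem~\ref{thm:homeo-lift}, take the representation with $\phi_\sigma=\sigma_K\circ\psi\circ\pi_M$ from Theorem~\ref{thm:ABS}, and replace $\phi_\sigma$ by $\phi$ using Lemma~\ref{lem:ae-equivalence}. Your extra check that $\phi^{-1}(Z)\subseteq\pi_M^{-1}\bigl(\psi^{-1}(\pi_K(Z))\bigr)$ is null for every null $Z\subseteq K$ is valid bookkeeping that the paper leaves implicit; note that $\psi^{-1}(\pi_K(Z))$ need not be Borel, so first enclose it in a Borel null set before applying the push-forward identity for $\pi_M$.
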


\begin{proof}
Since $K$ and $M$ are fiber $\psi$-compatible, Theorem~\ref{thm:homeo-lift} guarantees the existence of an increasing homeomorphism $\phi: M \to K$ satisfying $\pi_K \circ \phi = \psi \circ \pi_M$.
By Theorem~\ref{thm:ABS}, the isometry $T$ admits the representation
\[
(Tf)(y)=\varepsilon\, f(\phi_\sigma(y))\,\psi'(\pi_M(y))\quad\text{for a.e.\ }y\in M,
\]
where $\phi_\sigma=\sigma_K\circ\psi\circ\pi_M$.
Finally, by Lemma~\ref{lem:ae-equivalence}, we have $\phi = \phi_\sigma$ a.e.\ on $M$. Thus, we may replace $\phi_\sigma$ with the homeomorphism $\phi$ in the representation formula.
\end{proof}

\section{Gap \texorpdfstring{$\psi$}{\unichar{"03C8}}-compatibility and lipeomorphic \texorpdfstring{$\phi$}{\unichar{"03D5}}} \label{sec:gap_compat}

In this section we determine when a homeomorphism $\phi$ obtained in Theorem~\ref{thm:homeo-lift} is a lipeomorphism. This requires a geometric condition stronger than fiber $\psi$-compatibility, controlling the relative scaling of the gaps. The key observation is that for $a<b$ with $a,b\in K$ the length $b-a$ decomposes into
the measure increment $\pi_K(b)-\pi_K(a)$ plus the total length of gaps inside $(a,b)$.
Gap $\psi$-compatibility provides uniform control of the latter term, yielding
bi-Lipschitz bounds for $\phi$. Conditions controlling the scaling of gaps are reminiscent of the literature on Lipschitz equivalence of Cantor-type and self-similar subsets of $\R$; see, for instance, \cite{falconer1992lipschitz,david1997fractured,rao2006lipschitz} (and the survey \cite{rao2013lipschitz} for a broader account).
\par
First, we establish that any local order-isomorphism between fibers induces a well-defined correspondence between the incident gaps.

\begin{lemma} \label{lem:existence_unique_gap}
Let $K,M\subseteq\R$ be compact sets of positive measure, and let $\psi:[0,|M|]\to[0,|K|]$ be an increasing lipeomorphism. Assume that $K$ and $M$ are fiber $\psi$-compatible. Let $U=(\ell',r')$ be a gap of $M$ and let $s = \pi_M(\ell')=\pi_M(r')$.
For any order-isomorphism $h: \pi_M^{-1}(s) \to \pi_K^{-1}(\psi(s))$, there exists a unique gap $V=(\ell,r)$ of $K$ such that
\begin{equation} \label{eq:endpoint_mapping}
\ell = h(\ell') \quad \text{and} \quad r = h(r').
\end{equation}
We call $V$ the \emph{corresponding gap to $U$ via $h$}, denoted by $V_{U,h}$.
\end{lemma}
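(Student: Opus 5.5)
Uniqueness is immediate, since a gap is determined by its endpoints; if $V$ and $V'$ both satisfy \eqref{eq:endpoint_mapping}, they coincide. So the real content is existence. I will take $\ell\coloneqq h(\ell')$ and $r\coloneqq h(r')$ and show that $(\ell,r)$ is a gap of $K$. Since $\ell<r$ and $h$ is increasing, we have $\ell<r$, and both points lie in $\pi_K^{-1}(\psi(s))\subseteq K$. It therefore remains to show $(\ell,r)\cap K=\emptyset$.

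The key structural fact I will use is that a fiber $\pi_K^{-1}(t)$ equals $K\cap I_t$, where $I_t=\{\Pi_K=t\}$ is a closed interval, as noted in the proof of Lemma~\ref{lem:ae-equivalence}. Since $\ell,r\in I_{\psi(s)}$ and this set is an interval, $[\ell,r]\subseteq I_{\psi(s)}$. Consequently
\[
(\ell,r)\cap K\subseteq I_{\psi(s)}\cap K=\pi_K^{-1}(\psi(s)).
\]
Any point $x\in(\ell,r)\cap K$ would thus belong to the fiber, and we could write $x=h(y)$ for some $y\in\pi_M^{-1}(s)$, because $h$ is surjective onto the fiber.

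Since $h$ is an order-isomorphism, $\ell<x<r$ forces $\ell'<y<r'$. But $y\in M$ and $(\ell',r')\cap M=\emptyset$, which is a contradiction. Hence $(\ell,r)\cap K=\emptyset$, and $(\ell,r)$ is a gap of $K$.

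The only step needing care is the claim that the fiber is $K$ intersected with an interval, so that points of $K$ strictly between two fiber points lie in the same fiber. This follows directly from monotonicity of $\pi_K$: if $\ell<x<r$ with $x\in K$, then $\pi_K(\ell)\le\pi_K(x)\le\pi_K(r)$, and both outer values equal $\psi(s)$. I would use this direct argument rather than citing the interval description. Fiber $\psi$-compatibility enters only through the existence of $h$, which the statement already supplies.
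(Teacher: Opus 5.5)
Your proof is correct and is essentially the paper's argument. The paper first notes that $h$ preserves adjacency, so $h(\ell')$ and $h(r')$ are consecutive in $\pi_K^{-1}(\psi(s))$, and then uses monotonicity of $\Pi_K$ to show that any point of $K$ in $(\ell,r)$ would lie in that fiber; your pull-back of such a point via $h^{-1}$ into $(\ell',r')\cap M$ is the same reasoning in a different order (one small slip: the justification of $\ell<r$ should read ``since $\ell'<r'$ and $h$ is increasing'').
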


\begin{proof}
Since $U=(\ell',r')$ is a gap of $M$, we have $(\ell',r')\cap M=\emptyset$. In particular,
$\ell',r'\in M$ and $\pi_M(\ell')=\pi_M(r')=s$, and there is no $y\in \pi_M^{-1}(s)$ with
$\ell'<y<r'$, i.e.\ $\ell'$ and $r'$ are consecutive elements of the fiber $\pi_M^{-1}(s)$.
Since $h$ is an order-isomorphism, it preserves adjacency, so
\[
\ell \coloneqq h(\ell') \quad\text{and}\quad r \coloneqq h(r')
\]
are consecutive elements of the fiber $\pi_K^{-1}(\psi(s))$; in particular $\ell<r$.

Moreover, $\Pi_K$ is nondecreasing on $[\min K,\max K]$ and
\[
\Pi_K(\ell)=\pi_K(\ell)=\psi(s)=\pi_K(r)=\Pi_K(r),
\]
so $\Pi_K$ is constant on $[\ell,r]$. Hence, for every $x\in(\ell,r)$ we have
$\Pi_K(x)=\Pi_K(\ell)=\psi(s)$. If there existed $x\in K\cap(\ell,r)$, then
$\pi_K(x)=\Pi_K(x)=\psi(s)$, so $x\in \pi_K^{-1}(\psi(s))$, contradicting that $\ell<r$
are consecutive in that fiber. Therefore $(\ell,r)\cap K=\emptyset$, and $V=(\ell,r)$ is a gap of $K$. \par
 Uniqueness is immediate, since $\ell$ and $r$ are uniquely determined by $h$.
\end{proof}

\begin{definition}\label{def:gap_compatibility}
Let $K,M \subseteq \R$ be compact sets of positive measure, and let
$\psi:[0,|M|]\to[0,|K|]$ be an increasing lipeomorphism.
We say that $K$ and $M$ are \emph{gap $\psi$-compatible} if $K$ and $M$ are fiber $\psi$-compatible and
there exist order-isomorphisms
\[
h_s:\pi_M^{-1}(s)\to \pi_K^{-1}(\psi(s)) \qquad (s\in E_M)
\]
and a constant $C\ge 1$ such that for every gap $U=(\ell',r')$ of $M$ with
$s=\pi_M(\ell')=\pi_M(r')$ one has
\begin{equation}\label{eq:gap_control}
C^{-1}\le \frac{\lambda(V_{U,h_s})}{\lambda(U)}\le C,
\end{equation}
where $V_{U,h_s}$ is the corresponding gap of $K$ via $h_s$.
\end{definition}

\begin{example}[Fiber-compatible but not gap-compatible]\label{ex:fiber_not_gap}
Let
\[
K=\{0\}\cup\Bigl\{\frac1n : n\in\N\Bigr\}\cup[1,2],
\qquad
M=\{0\}\cup\Bigl\{\frac1{n^2} : n\in\N\Bigr\}\cup[1,2].
\]
Then $K$ and $M$ are compact, $|K|=|M|=1$, and we take $\psi=\mathrm{id}_{[0,1]}$.

\smallskip
\noindent\emph{Fiber $\psi$-compatibility.}
Since the only positive-measure part is $[1,2]$, we have
\[
\Pi_K(x)=\Pi_M(x)=
\begin{cases}
0, & x<1,\\
x-1, & x\in[1,2],
\end{cases}
\]
and hence, for $t\in(0,1]$,
\[
\pi_K^{-1}(t)=\{1+t\}=\pi_M^{-1}(t).
\]
The only nontrivial fiber occurs at $t=0$, and
\[
\pi_K^{-1}(0)=\{0\}\cup\Bigl\{\frac1n : n\in\N\Bigr\},
\qquad
\pi_M^{-1}(0)=\{0\}\cup\Bigl\{\frac1{n^2} : n\in\N\Bigr\}.
\]
In particular, $E_K=E_M=\{0\}$ and $\psi(E_M)=E_K$.
Define an order-isomorphism $h_0:\pi_M^{-1}(0)\to\pi_K^{-1}(0)$ by
\[
h_0(0)=0,\qquad h_0\Bigl(\frac1{n^2}\Bigr)=\frac1n\quad(n\in\N).
\]
Thus $K$ and $M$ are fiber $\psi$-compatible.

\smallskip
\noindent\emph{Failure of gap $\psi$-compatibility.}
For $n\in \N$, let
\[
U_n=\left(\frac1{(n+1)^2},\frac1{n^2}\right)\!,
\]
which is a gap of $M$. The corresponding gap of $K$ via $h_0$ is
\[
V_n=V_{U_n,h_0}=\left(\frac1{n+1},\frac1n\right)\!.
\]
Therefore,
\[
\lambda(V_n)=\frac1n-\frac1{n+1}=\frac1{n(n+1)},
\qquad
\lambda(U_n)=\frac1{n^2}-\frac1{(n+1)^2}=\frac{2n+1}{n^2(n+1)^2},
\]
and hence
\[
\frac{\lambda(V_{U_n,h_0})}{\lambda(U_n)}
=\frac{\lambda(V_n)}{\lambda(U_n)}
=\frac{n(n+1)}{2n+1}\to\infty, \quad \text{as} \ n\to\infty.
\]
Thus there is no constant $C\ge1$ satisfying \eqref{eq:gap_control}, and $K$ and $M$ are not gap
$\psi$-compatible.
\end{example}

\begin{lemma}\label{lem:interval_decomposition}
Let $K\subseteq\R$ be compact. Then for every $a<b$ with $a,b \in K$ one has
\begin{equation}\label{eq:decomp_interval}
b-a = \big(\pi_K(b)-\pi_K(a)\big)+\!\!\sum_{V\subseteq(a,b)}\!\! \lambda(V),
\end{equation}
where the sum runs over all gaps $V$ of $K$ contained in $(a,b)$.
\end{lemma}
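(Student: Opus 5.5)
Since $a,b\in K$ and $a<b$, the interval $[a,b]$ splits, up to the two endpoints, into $[a,b]\cap K$ and $(a,b)\setminus K$. The plan is to compute the measure of each part separately and add them, using countable additivity of $\lambda$:
\[
b-a=\lambda([a,b]\cap K)+\lambda\big((a,b)\setminus K\big).
\]

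\emph{First term.} By the definition \eqref{pi_K} of $\pi_K$,
\[
\lambda([a,b]\cap K)=\lambda([\min K,b]\cap K)-\lambda([\min K,a)\cap K).
\]
Since singletons are null, $\lambda([\min K,a)\cap K)=\pi_K(a)$, so the first term equals $\pi_K(b)-\pi_K(a)$.

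\emph{Second term.} The set $(a,b)\setminus K$ is open, so it is a countable disjoint union of open intervals, namely its connected components. The main point is to show that these components are exactly the gaps $V$ of $K$ with $V\subseteq(a,b)$.

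Let $(\alpha,\beta)$ be a component. It is contained in $(a,b)\subseteq[\min K,\max K]$ and misses $K$. Maximality forces $\alpha\in K$: otherwise either $\alpha=a\in K$, or $\alpha>a$ and $\alpha\notin K$, and then, $K$ being closed, a neighborhood of $\alpha$ misses $K$, so the component could be enlarged. The same argument gives $\beta\in K$. Hence $(\alpha,\beta)$ is a gap of $K$ in the sense of the definition.

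Conversely, let $V=(\ell,r)$ be a gap with $V\subseteq(a,b)$. Then $V$ is a connected subset of $(a,b)\setminus K$, so it lies in some component $(\alpha,\beta)$. Since $\ell,r\in K$ and the component misses $K$, we get $\alpha\ge\ell$ and $\beta\le r$, so $V$ equals that component.

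This bijection gives $\lambda((a,b)\setminus K)=\sum_{V\subseteq(a,b)}\lambda(V)$, and adding the two terms yields \eqref{eq:decomp_interval}.

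The only delicate step is this identification of components with gaps, specifically checking that the component endpoints lie in $K$ even when they coincide with $a$ or $b$. That is exactly where the hypothesis $a,b\in K$ is used.
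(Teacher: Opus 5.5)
Your proof is correct and follows the same route as the paper: you split $[a,b]$ into its $K$-part, whose measure is $\pi_K(b)-\pi_K(a)$, and the open set $(a,b)\setminus K$, whose components you identify with the gaps of $K$ contained in $(a,b)$. You give more detail than the paper does, namely both directions of the component--gap correspondence and the endpoint cases $\alpha=a$, $\beta=b$, but the argument is the same.
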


\begin{proof}
Since $K$ is compact (hence closed) and $a,b \in K$, each connected component of $(a,b) \setminus K$ is an open interval $(\ell,r) \subseteq (a,b)$ whose endpoints $\ell,r$ belong to $K$. Hence these (at most countably many) components are exactly the gaps of $K$ contained in~$(a,b)$, and so
\[
(a,b)=\big((a,b)\cap K\big)\cup\bigcup_{V\subseteq(a,b)} V.
\]
Therefore
\[
b-a=\lambda\big((a,b)\cap K\big)+\sum_{V\subseteq(a,b)}\lambda(V).
\]
By definition of $\pi_K$, we have $\lambda\big((a,b)\cap K\big)=\pi_K(b)-\pi_K(a)$, which yields
\eqref{eq:decomp_interval}.
\end{proof}

\begin{theorem} \label{thm:lipeomorphism}
Let $K,M\subseteq \R$ be compact sets of positive measure, and let $\psi:[0,|M|]\to[0,|K|]$ be an increasing lipeomorphism. There exists a lipeomorphism $\phi: M \to K$ satisfying $\pi_K \circ \phi = \psi \circ \pi_M$ if and only if $K$ and $M$ are gap $\psi$-compatible.
\end{theorem}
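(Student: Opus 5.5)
The plan is to prove sufficiency by taking the increasing homeomorphism of Theorem~\ref{thm:homeo-lift}, built from the order-isomorphisms $h_s$ supplied by gap $\psi$-compatibility, and bounding its distortion with Lemma~\ref{lem:interval_decomposition}; necessity comes from restricting a given lipeomorphism to the fibers.

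\emph{Sufficiency.} Assume $K$ and $M$ are gap $\psi$-compatible, with order-isomorphisms $h_s$ ($s\in E_M$) and constant $C$. Since gap compatibility includes fiber compatibility, I run the construction in the proof of Theorem~\ref{thm:homeo-lift} using exactly these $h_s$. This gives an increasing homeomorphism $\phi:M\to K$ with $\pi_K\circ\phi=\psi\circ\pi_M$. Because $E_M\subseteq Q_M$, every fiber over $E_M$ lies in $S_M$, so $\phi$ coincides with $h_s$ on $\pi_M^{-1}(s)$.

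Fix $a<b$ in $M$. The first claim is that $U\mapsto V_{U,h_s}$ is a bijection from the gaps of $M$ contained in $(a,b)$ onto the gaps of $K$ contained in $(\phi(a),\phi(b))$. Every gap $U=(\ell',r')$ of $M$ has both endpoints in one fiber over some $s\in E_M$. Then $\phi(\ell')=h_s(\ell')$ and $\phi(r')=h_s(r')$, so Lemma~\ref{lem:existence_unique_gap} shows that $(\phi(\ell'),\phi(r'))=V_{U,h_s}$ is a gap of $K$. Monotonicity of $\phi$ places this gap inside $(\phi(a),\phi(b))$ exactly when $U\subseteq(a,b)$. The same argument applied to $\phi^{-1}$, which is the analogous lift for $\psi^{-1}$ and $h_s^{-1}$, gives surjectivity.

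Lemma~\ref{lem:interval_decomposition} then gives
\[
\phi(b)-\phi(a)=\psi(\pi_M(b))-\psi(\pi_M(a))+\sum_{U\subseteq(a,b)}\lambda(V_{U,h_s}).
\]
Using \eqref{eq:gap_control} and $\mathrm{Lip}(\psi)$, the right-hand side is at most $\max(\mathrm{Lip}(\psi),C)\,(b-a)$. Using $\mathrm{Lip}(\psi^{-1})$ and $C^{-1}$ instead, it is at least $\max(\mathrm{Lip}(\psi^{-1}),C)^{-1}(b-a)$. Hence $\phi$ is bi-Lipschitz.

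\emph{Necessity.} Let $\phi$ be a lipeomorphism with constant $L$ satisfying the intertwining relation. The injectivity/surjectivity argument in the first half of the proof of Theorem~\ref{thm:homeo-lift} shows that $\psi(E_M)=E_K$. It also shows that $\phi$ maps each fiber $\pi_M^{-1}(s)$ bijectively onto $\pi_K^{-1}(\psi(s))$. If $y_1<y_2$ lie in different fibers, then $\pi_M(y_1)<\pi_M(y_2)$, so $\pi_K(\phi(y_1))<\pi_K(\phi(y_2))$ and therefore $\phi(y_1)<\phi(y_2)$. Thus $\phi$ preserves order across fibers.

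The main obstacle is the order inside a single fiber: the statement does not require $\phi$ to be increasing, and $\phi$ could permute isolated points of a fiber. I would first try to show that $\phi$ can be replaced by an increasing lipeomorphism with the same intertwining property, or more directly that the increasing bijection $h_s$ between the fibers still satisfies \eqref{eq:gap_control}.
\begin{enumerate}[(i)]
\item If $\phi$ is increasing, this step is immediate. Take $h_s=\phi|_{\pi_M^{-1}(s)}$. For a gap $U=(\ell',r')$ over $s$, we get $V_{U,h_s}=(\phi(\ell'),\phi(r'))$, so $\lambda(V_{U,h_s})=|\phi(r')-\phi(\ell')|$, which lies in $[L^{-1}\lambda(U),L\lambda(U)]$. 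This gives \eqref{eq:gap_control} with $C=L$.
\item Otherwise, I would compare consecutive points of the fiber using the bi-Lipschitz bounds together with Lemma~\ref{lem:interval_decomposition} applied inside the fiber's hull. The aim is to show that the $k$-th gap of $M$ in the hull is comparable to the $k$-th gap of $K$. I expect this comparison to be the delicate part of the proof.
\end{enumerate}
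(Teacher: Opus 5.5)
Your sufficiency argument is the paper's. You lift via Theorem~\ref{thm:homeo-lift} with the given $h_s$, match gaps of $M$ in $(a,b)$ with gaps of $K$ in $(\phi(a),\phi(b))$ through Lemma~\ref{lem:existence_unique_gap} and the inverse lift, and estimate the decomposition of Lemma~\ref{lem:interval_decomposition}. The only difference is the lower bound. The paper reruns the upper-bound argument for $\phi^{-1}$, while you bound $\phi(b)-\phi(a)$ from below directly using $C^{-1}$, $\mathrm{Lip}(\psi^{-1})^{-1}$ and the decomposition of $b-a$ in $M$; both work. Your necessity case (i) is also exactly the paper's argument, and it is the only case the paper treats. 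Its proof says ``since $\phi$ is increasing'' and invokes the necessity half of Theorem~\ref{thm:homeo-lift}, which concerns increasing homeomorphisms. So the theorem is intended for increasing $\phi$, as in Corollary~\ref{cor:gap_compatible_isometry}. You were right to notice that the literal statement omits this.

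Your case (ii), however, is not a delicate step waiting to be filled in: it is false. So is the fallback of replacing $\phi$ by an increasing lift. Take
\[
M=\{0\}\cup\{1/n:n\in\N\}\cup[2,3],\qquad K=\{1-1/n:n\in\N\}\cup\{1\}\cup[2,3],\qquad \psi=\mathrm{id}_{[0,1]},
\]
and let $\phi(x)=1-x$ on $M\cap[0,1]$ and $\phi(x)=x$ on $[2,3]$.
\begin{enumerate}[(a)]
\item $\phi:M\to K$ is a bijection and is isometric on each of the two pieces.
\item For $x\in M\cap[0,1]$ and $y\in[2,3]$ one has $\phi(y)-\phi(x)=y-1+x\in[1,3]$ and $y-x\in[1,3]$, so $\phi$ is a lipeomorphism.
\item $\pi_K\circ\phi=\pi_M$, since both sides vanish on $M\cap[0,1]$ and equal $y-2$ on $[2,3]$.
\end{enumerate}
But $E_M=E_K=\{0\}$, and the minimum $0$ of $\pi_M^{-1}(0)=\{0\}\cup\{1/n:n\in\N\}\cup\{2\}$ has no immediate successor. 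By contrast, the minimum $0$ of $\pi_K^{-1}(0)$ has immediate successor $1/2$. Hence no increasing bijection between these fibers exists, so $K$ and $M$ are not even fiber $\psi$-compatible. By Theorem~\ref{thm:homeo-lift}, no increasing lift exists at all.

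Independently of this, your ``$k$-th gap'' comparison presupposes fibers whose gaps can be enumerated in order, which fails for general compact null fibers. The right fix is to state the theorem with ``increasing lipeomorphism''. With that change, your sufficiency part and case (i) form a complete proof matching the paper's.
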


\begin{proof}
Assume first that $K$ and $M$ are gap $\psi$-compatible, and let $\phi:M\to K$ be the increasing
homeomorphism given by Theorem~\ref{thm:homeo-lift}, constructed from the family $\{h_s\}_{s\in E_M}$.
We prove that $\phi$ is Lipschitz.

Let $x<y$ in $M$. Since $\phi(x),\phi(y)\in K$, applying Lemma~\ref{lem:interval_decomposition} to $K$
on the interval $(\phi(x),\phi(y))$ gives
\begin{equation}\label{eq:phi-decomp}
\phi(y)-\phi(x)
=\big(\pi_K(\phi(y))-\pi_K(\phi(x))\big)+\sum_{V\subseteq(\phi(x),\phi(y))}\lambda(V),
\end{equation}
where the sum runs over all gaps $V$ of $K$ contained in $(\phi(x),\phi(y))$.
Since $\pi_K\circ\phi=\psi\circ\pi_M$, we have
\begin{align*}
\pi_K(\phi(y))-\pi_K(\phi(x))
& \le \|\psi'\|_\infty\,(\pi_M(y)-\pi_M(x)) \\
& \le \|\psi'\|_\infty\,(y-x).
\end{align*}
Next, we estimate the gap sum. Let $U=(\ell',r')$ be a gap of $M$ contained in $(x,y)$ and set
$s=\pi_M(\ell')=\pi_M(r')$. Then Lemma~\ref{lem:existence_unique_gap} implies that
$V_{U,h_s}=(h_s(\ell'),h_s(r'))$ is a gap of $K$, and by construction of $\phi$ we have
$\phi(\ell')=h_s(\ell')$ and $\phi(r')=h_s(r')$. In particular, the gap of $K$ with endpoints
$\phi(\ell')$ and $\phi(r')$ is precisely $V_{U,h_s}$. Applying the same argument to $\phi^{-1}$,
we obtain that the correspondence
\[
(\ell',r')\mapsto (\phi(\ell'),\phi(r'))
\]
is a bijection between the gaps of $M$ contained in $(x,y)$ and the gaps of $K$ contained in
$(\phi(x),\phi(y))$. 
Therefore
\[
\sum_{V\subseteq(\phi(x),\phi(y))}\lambda(V)
=\sum_{U\subseteq(x,y)}\lambda(V_{U,h_s})
\le C \sum_{U\subseteq(x,y)}\lambda(U),
\]
where $C$ is the constant from \eqref{eq:gap_control}.

Since the gaps $U\subseteq(x,y)$ are pairwise disjoint open intervals contained in $(x,y)$, we have
\[\sum_{U\subseteq(x,y)}\lambda(U)\le y-x,\] and hence
\[
\sum_{V\subseteq(\phi(x),\phi(y))}\lambda(V)\le C\,(y-x).
\]
Combining this with \eqref{eq:phi-decomp} yields
\[
\phi(y)-\phi(x)\le (\|\psi'\|_\infty+C)\,(y-x),
\]
so $\phi$ is Lipschitz. Applying the same argument to $\phi^{-1}$ (using $\psi^{-1}$ and the corresponding gap control) shows that $\phi^{-1}$ is Lipschitz, hence $\phi$ is a lipeomorphism.

\smallskip
Conversely, assume there exists a lipeomorphism $\phi:M\to K$ such that
$\pi_K\circ\phi=\psi\circ\pi_M$. For each $s\in E_M$, define $h_s$ as the restriction
$h_s\coloneqq \phi|_{\pi_M^{-1}(s)}$. By the necessity part of Theorem~\ref{thm:homeo-lift}, each $h_s$
is an order-isomorphism.

Let $U=(\ell',r')$ be a gap of $M$ and set $s=\pi_M(\ell')=\pi_M(r')$. Since $\phi$ is increasing,
$(\phi(\ell'),\phi(r'))$ is an open interval, and because $\phi$ is a homeomorphism $M\to K$ we have
$(\phi(\ell'),\phi(r'))\cap K=\emptyset$, so $V\coloneqq (\phi(\ell'),\phi(r')) = (h_s(\ell'),h_s(r'))$ is a gap of $K$.
By construction, $V=V_{U,h_s}$. If $\mathrm{L}$ and $\mathrm{L}'$ are Lipschitz constants for $\phi$ and $\phi^{-1}$, then
\[
\lambda(V_{U,h_s})=\phi(r')-\phi(\ell')\le\mathrm{L}\,(r'-\ell')=\mathrm{L}\,\lambda(U),
\]
and similarly $\lambda(U)\le \mathrm{L}'\,\lambda(V_{U,h_s})$. Therefore \eqref{eq:gap_control} holds with
$C=\max\{\mathrm{L},\mathrm{L}'\}$, and $K$ and $M$ are gap $\psi$-compatible.
\end{proof}

\begin{corollary}\label{cor:gap_compatible_isometry}
Let $K,M\subseteq\R$ be compact sets of positive measure. Assume that there exists an increasing
lipeomorphism $\psi:[0,|M|]\to[0,|K|]$ such that $K$ and $M$ are gap $\psi$-compatible, and let
$T:(L^\infty(K),\normA{\cdot})\to(L^\infty(M),\normA{\cdot})$ be the surjective linear
isometry associated to $\psi$ as in Theorem~\ref{thm:ABS}. Then there exist $\varepsilon\in\{\pm1\}$
and an increasing lipeomorphism $\phi:M\to K$ with $\pi_K\circ\phi=\psi\circ\pi_M$ such that
\[
(Tf)(y)=\varepsilon\,f(\phi(y))\,\tilde\phi'(y)\qquad\text{for a.e.\ }y\in M,
\]
where $\tilde\phi:[\min M,\max M]\to[\min K,\max K]$ denotes the extension of $\phi$ which is affine on
each gap of $M$.
\end{corollary}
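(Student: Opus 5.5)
We start from Corollary~\ref{cor:fiber_compatible_isometry}. Gap $\psi$-compatibility includes fiber $\psi$-compatibility, so Theorem~\ref{thm:lipeomorphism}, together with the construction in Theorem~\ref{thm:homeo-lift} using the family $\{h_s\}$ from Definition~\ref{def:gap_compatibility}, gives an increasing lipeomorphism $\phi:M\to K$ with $\pi_K\circ\phi=\psi\circ\pi_M$. Corollary~\ref{cor:fiber_compatible_isometry} then yields
\[
(Tf)(y)=\varepsilon\, f(\phi(y))\,\psi'(\pi_M(y))\quad\text{for a.e. } y\in M.
\]
It therefore suffices to show that $\tilde\phi'(y)=\psi'(\pi_M(y))$ for a.e.\ $y\in M$.

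First we make $\tilde\phi$ precise and check that it is Lipschitz. On $M$ set $\tilde\phi=\phi$. On each gap $(\ell',r')$ of $M$, interpolate affinely between $\phi(\ell')$ and $\phi(r')$; these are endpoints of a gap of $K$ by the argument in the converse part of Theorem~\ref{thm:lipeomorphism}. Then $\tilde\phi$ is increasing and maps $[\min M,\max M]$ onto $[\min K,\max K]$, because $\phi$ preserves the endpoints $\min$ and $\max$ and maps gaps onto gaps. On a gap, its slope is $\lambda(V)/\lambda(U)\le C$.

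For general $x<y$ in $[\min M,\max M]$, we split the increment as follows. Let $x\le x_1\le y_1\le y$, where $x_1$ and $y_1$ are the nearest points of $M$; if $x$ and $y$ lie in the same gap, the bound is immediate. The increment $\tilde\phi(y)-\tilde\phi(x)$ is then the sum of two affine pieces and the increment $\phi(y_1)-\phi(x_1)$. We bound the latter by the Lipschitz constant of $\phi$. This shows $\tilde\phi$ is Lipschitz, hence absolutely continuous and differentiable a.e.

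Next, we compare $\tilde\phi$ with $\Pi_K$. Since $\tilde\phi$ maps gaps of $M$ into gaps of $K$, and $\Pi_K$ is constant on the closure of each gap, the identity $\Pi_K\circ\tilde\phi=\psi\circ\Pi_M$ holds on all of $[\min M,\max M]$. Both sides are constant on each closed gap of $M$ and agree on $M$.

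The key step is to differentiate this identity at a.e.\ point of $M$. At almost every $y\in M$ the following all hold:
\begin{itemize}
\item $y$ is a Lebesgue density point of $M$;
\item $\tilde\phi$ is differentiable at $y$;
\item $\Pi_M'(y)=1$;
\item $\psi$ is differentiable at $\pi_M(y)$.
\end{itemize}
The last item holds a.e.\ because $\pi_M$ maps null sets to null sets and, conversely, the preimage under $\Pi_M$ of a null set intersected with $M$ is null. This follows from the area formula, using $\Pi_M'=\1_M$.

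At such a point $y$, the right-hand side has derivative $\psi'(\pi_M(y))$. On the left, we want to argue that $\Pi_K'(\tilde\phi(y))=1$, so that the chain rule gives $\tilde\phi'(y)$. A cleaner route avoids differentiating $\Pi_K$. Take difference quotients along points $z\to y$ with $z\in M$; these are available since $y$ is a density point. For such $z$, Lemma~\ref{lem:interval_decomposition} gives
\[
\phi(z)-\phi(y)=\big(\psi(\pi_M(z))-\psi(\pi_M(y))\big)+\sum_{V}\lambda(V).
\]
By gap control, the gap sum is at most $C$ times the total length of the gaps of $M$ between $y$ and $z$. That total length is $o(|z-y|)$ at a density point, again by Lemma~\ref{lem:interval_decomposition} applied in $M$. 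Hence
\[
\tilde\phi'(y)=\lim\frac{\psi(\pi_M(z))-\psi(\pi_M(y))}{z-y}=\psi'(\pi_M(y)),
\]
using $\pi_M(z)-\pi_M(y)=(z-y)+o(|z-y|)$.

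This density-point estimate is the main obstacle. In particular, the gap-sum control must be uniform, which is exactly where gap $\psi$-compatibility enters. We must also justify that differentiability of $\psi$ at $\pi_M(y)$ holds for a.e.\ $y\in M$. With that established, substituting $\tilde\phi'$ for $\psi'\circ\pi_M$ in the formula above completes the proof.
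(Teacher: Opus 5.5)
Your argument is correct. Up to the key differentiation step it matches the paper: you obtain the increasing lipeomorphism $\phi$ from Theorem~\ref{thm:lipeomorphism}, replace $\phi_\sigma$ by $\phi$ in the representation, define $\tilde\phi$ by affine interpolation, and reduce everything to $\tilde\phi'=\psi'\circ\pi_M$ a.e.\ on $M$. For the replacement, cite Lemma~\ref{lem:ae-equivalence} directly, as the paper does. Corollary~\ref{cor:fiber_compatible_isometry} only asserts the formula for \emph{some} lifting homeomorphism, whereas the lemma applies to any increasing one, in particular yours.

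From there the routes differ. The paper applies the chain rule to $\Pi_K\circ\tilde\phi=\psi\circ\Pi_M$. It gets $\Pi_K'(\phi(y))=1$ for a.e.\ $y\in M$ because the Lipschitz map $\phi^{-1}$ sends the null set where $\Pi_K'\neq\1_K$ to a null set. You never evaluate $\Pi_K'$. At a density point $y$ of $M$ you take difference quotients along $z\in M$ and split $\phi(z)-\phi(y)$ with Lemma~\ref{lem:interval_decomposition}. You then use the gap bijection from the proof of Theorem~\ref{thm:lipeomorphism} and the bound $\lambda(V)\le C\lambda(U)$ to show that the gaps of $K$ between $\phi(y)$ and $\phi(z)$ have total length $o(|z-y|)$. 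Two consequences follow. Your final argument no longer needs the identity $\Pi_K\circ\tilde\phi=\psi\circ\Pi_M$ on the whole interval. And this step uses only the upper gap bound, not the Lipschitz property of $\phi^{-1}$.

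The paper's route is shorter and relies on standard facts about Lipschitz maps. Yours is more elementary and shows exactly how gap control produces the derivative. Your write-up also supplies two points the paper leaves implicit. First, it proves that $\tilde\phi$ is Lipschitz. Second, it shows that $\psi$ is differentiable at $\pi_M(y)$ for a.e.\ $y\in M$, since preimages of null sets under $\pi_M$ are null; the paper's chain-rule step needs this fact too.
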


\begin{proof}
Since $K$ and $M$ are gap $\psi$-compatible, Theorem~\ref{thm:lipeomorphism} yields an increasing
lipeomorphism $\phi:M\to K$ such that $\pi_K\circ\phi=\psi\circ\pi_M$ on $M$.

By Theorem~\ref{thm:ABS}, the isometry $T$ admits the representation
\[
(Tf)(y)=\varepsilon\,f(\phi_\sigma(y))\,\psi'(\pi_M(y))\qquad\text{for a.e.\ }y\in M,
\]
where $\phi_\sigma=\sigma_K\circ\psi\circ\pi_M$ and $\varepsilon\in\{\pm1\}$.
By Lemma~\ref{lem:ae-equivalence}, $\phi_\sigma=\phi$ a.e.\ on $M$, hence
\begin{equation}\label{eq:T_rep_with_psi_prime}
(Tf)(y)=\varepsilon\,f(\phi(y))\,\psi'(\pi_M(y))\qquad\text{for a.e.\ }y\in M.
\end{equation}
\par
Let $\tilde\phi:[\min M,\max M]\to[\min K,\max K]$ be defined by $\tilde\phi=\phi$ on $M$, and on each
gap $U=(\ell',r')$ of $M$ by linear interpolation:
\[
\tilde\phi(\tilde{y})\coloneqq \phi(\ell')+\frac{\tilde{y}-\ell'}{r'-\ell'}\big(\phi(r')-\phi(\ell')\big),
\qquad \tilde{y}\in[\ell',r'].
\]
Then $\tilde\phi$ is increasing and Lipschitz, hence differentiable a.e.\ on the interval $[\min M,\max M]$.

We claim that
\begin{equation}\label{eq:Pi_identity}
\Pi_K(\tilde\phi(\tilde{y}))=\psi(\Pi_M(\tilde{y}))\qquad\text{for all }\tilde{y}\in[\min M,\max M].
\end{equation}
If $y\in M$, then $\Pi_M(y)=\pi_M(y)$ and $\tilde\phi(y)=\phi(y)\in K$, hence
\[
\Pi_K(\tilde\phi(y))=\pi_K(\phi(y))=\psi(\pi_M(y))=\psi(\Pi_M(y)).
\]

Now let $\tilde{y}\in(\ell',r')$ where $U=(\ell',r')$ is a gap of $M$, and set
$s\coloneqq \Pi_M(\ell')=\Pi_M(r')$ (so $\Pi_M\equiv s$ on $[\ell',r']$).
By gap $\psi$-compatibility and Lemma~\ref{lem:existence_unique_gap}, $\phi$ maps the endpoints of
$U$ to the endpoints of a gap $V=(\ell,r)$ of $K$, i.e.
\[
\ell=\phi(\ell')\in K,\qquad r=\phi(r')\in K,\qquad (\ell,r)\cap K=\emptyset,
\]
and moreover $\pi_K(\ell)=\pi_K(r)=\psi(s)$. Since $V$ is a gap of $K$, the function $\Pi_K$ is
constant on $[\ell,r]$ with value $\psi(s)$. Because $\tilde\phi([\ell',r'])=[\ell,r]$, we obtain
for every $\tilde{y}\in[\ell',r']$:
\[
\Pi_K(\tilde\phi(\tilde{y}))=\psi(s)=\psi(\Pi_M(\tilde{y})).
\]
This proves \eqref{eq:Pi_identity}. \par
Since $\Pi_K,\Pi_M,\psi$ and $\tilde\phi$ are Lipschitz on intervals, they are differentiable a.e.,
and the chain rule gives for a.e.\ $\tilde{y}\in[\min M,\max M]$:
\[
\Pi_K'(\tilde\phi(\tilde{y}))\,\tilde\phi'(\tilde{y})=\psi'(\Pi_M(\tilde{y}))\,\Pi_M'(\tilde{y}).
\]

By Lemma~\ref{lem:Pi_properties}, there exists a null set $N_K\subseteq[\min K,\max K]$ such that
$\Pi_K'(x)=\1_K(x)$ for all $x\notin N_K$, and similarly $\Pi_M'(x)=\1_M(x)$ for all $x$ outside a null set.
Since $\phi^{-1}:K\to M$ is Lipschitz, it maps null sets to null sets, hence
$\lambda\!\bigl(\phi^{-1}(N_K)\bigr)=0$.
Therefore, for a.e.\ $y\in M$ we have $\tilde\phi(y)=\phi(y)\in K\setminus N_K$ and thus $\Pi_K'(\tilde\phi(y))=1$,
while also $\Pi_M'(y)=1$ and $\Pi_M(y)=\pi_M(y)$.
Consequently,
\[
\tilde\phi'(y)=\psi'(\pi_M(y))\qquad\text{for a.e.\ }y\in M.
\]
Substituting this into \eqref{eq:T_rep_with_psi_prime} yields
\[
(Tf)(y)=\varepsilon\,f(\phi(y))\,\tilde\phi'(y)\qquad\text{for a.e.\ }y\in M,
\]
as desired.
\end{proof}

\section*{Acknowledgments}
This work grew out of fruitful discussions with Professor Th.\ De Pauw. This research was conducted while the author was a Postdoc at the University of Vienna, supported by the Austrian Science Fund (FWF), project 10.55776/F65.

\end{document}